\documentclass[
a4paper,				
english
]{extarticle}

\newif\ifshow
\showtrue   

\makeatletter
\title{A Harmonic Framework for Vector Fields and Differential Operators on SO(3)}\let\Title\@title
\date{\today}\let\Date\@date
\makeatother
\newcommand{\keywords}{rotation group SO(3),harmonic analyis,Wigner-D functions,generalized spherical harmonics,tangent space basis,tangent bundle frame,harmonic vector fields on SO(3),differential operators,optimal sampling,rotational diffusion}

\usepackage{header}
\usepackage{headerFormulas}

\usepackage[final]{microtype}	

\begin{document}

\setcounter{section}{0}
\setcounter{subsection}{1}

\author{Ralf
  Hielscher\orcidlink{0000-0002-6342-1799}%
  \thanks{ralf.hielscher@math.tu-freiberg.de}~,~%
  Erik Wünsche \orcidlink{0000-0001-5178-6654}%
  \thanks{erik.wuensche@math.tu-freiberg.de}}

\maketitle

\begin{abstract}
  We present a comprehensive framework for tangent vector fields and differential operators on the rotation group $\SO3$ using harmonic series expansions, which provides a mathematical foundation for their implementation in the crystallographic texture analysis software \texttt{MTEX}~\cite{MTEX}.
  The central idea is to employ the standard left- and right-invariant frames as global orthonormal frames of the tangent bundle, thereby avoiding the numerical instabilities of the classical tangent space basis derived from the Jacobian of the Euler angle parametrization.

  While the choice of frames is primarily motivated by their geometric and numerical properties, their full potential emerges in the harmonic setting.
  Representing tangent vector fields through harmonic expansions of their frame components, we derive explicit frequency domain formulas for the gradient, divergence, and curl, allowing these operators to be applied directly to the harmonic coefficients.
  Moreover, we show that these differential operators preserve harmonic band-limitedness.

  In addition, the left- and right-invariant representations of tangent vector fields can be transformed into one another directly in the frequency domain, with an increase in harmonic bandwidth of at most one degree.
\end{abstract}



\section{Introduction}

Many problems involving rotations are naturally formulated in terms of scalar functions on the rotation group $\SO3$.
Such functions may represent rotational probability densities~\cite{Morawiec2004,Mason2009,Eghtesad2018}, interaction or potential energies~\cite{Tarroni1991}, orientation-dependent material properties~\cite{Mainprice2011,Bunge1989,Kilian2017}, correlation functions~\cite{Hielscher2019,Lenthe2020}, or objective functions in optimization problems on $\SO3$~\cite{Graef2013,Dick2023}.

For scalar functions, harmonic analysis on $\SO3$ provides a well-established analogue of Fourier analysis, cf.~\cite{Vilenkin2012,Bunge1982}:
The Wigner-D functions form an orthonormal basis of $\L2SO3$, and band-limited functions admit finite harmonic expansions.
This allows fast algorithms~\cite{Potts2009,Kostelec2008,Risbo1996,Hielscher2026} to be used for the efficient approximation, evaluation, and manipulation of scalar functions on $\SO3$.
The MATLAB toolbox \texttt{MTEX}~\cite{MTEX} is a fundamental tool in crystallographic texture analysis and provides a comprehensive computational framework for working with such harmonic representations, including implementations of the corresponding fast algorithms.

However, many operations of interest naturally lead beyond scalar functions to tangent vector fields on $\SO3$.
The Riemannian gradient of a scalar function on $\SO3$ is a tangent vector field~\cite{Varshalovich1988}, optimization methods evolve rotations along gradient directions~\cite{Graef2013,Dick2023}, and evolution equations on $\SO3$ are often formulated in terms of tangent fluxes and their divergence, cf.~\cite{Tarroni1991,Morawiec2004,Bunge1984}.
Thus, a harmonic framework for scalar functions on $\SO3$ naturally leads to the question of how tangent vector fields and differential operators should be represented in this harmonic setting.

A tangent vector field $\vec F$ on $\SO3$ assigns to each rotation $\mat R\in\SO3$ a tangent vector $\vec F(\mat R)\in T_{\mat R}\SO3$, where the tangent space itself varies with $\mat R$.
To describe tangent vectors at different rotations in a consistent manner, we therefore need to choose a basis in each tangent space $T_{\mat R}\SO3$ such that these bases vary smoothly with $\mat R$.
Such a smoothly varying family of tangent space bases is called a frame of the tangent bundle, cf. \Cref{sec:TangentBundleFrames}.
More precisely, with respect to a frame $\set{\vec X_1,\vec X_2,\vec X_3}{}$ of $\TSO3$, the vector field admits the unique representation
\[ \vec F (\mat R) = F_{1}(\mat R) \vec X_{1}(\mat R)  + F_{2}(\mat R)\vec X_{2}(\mat R) + F_{3}(\mat R) \vec X_{3}(\mat R), \]
where $F_1,F_2,F_3\colon\SO3\to\IR$ are scalar component functions.
Once a frame has been fixed, any smooth tangent vector field can be represented harmonically by expanding its three scalar component functions in the Wigner-D basis.
Different frames therefore lead to different harmonic representations of the same geometric vector field.
The central question is therefore to identify a geometrically natural frame with respect to which the relevant differential operators admit simple and explicit frequency domain representations.

Rotations are commonly represented by Euler angles, which are also used to express the Wigner-D functions and the Haar measure on $\SO3$.
Thus, a natural first choice is the local frame induced by the Euler angle parametrization, see \Cref{sec:TangentSpaceBasisEulerAngle}.
This frame is widely used in the literature, cf.~\cite{Bunge1986,Kalidindi2005,Graef2013}, and is obtained by differentiating the parametrization with respect to its three Euler angles.
However, it is not globally valid, since it degenerates at the singular sets of the Euler angle parametrization, which makes it numerically ill-conditioned near them.

As an alternative, in~\Cref{sec:TangentSpaceBasisStandard}, we consider two standard frames, previously used in~\cite{Tarroni1991,Boscain2005,Breev2023}, which are globally well-defined on $\SO3$ and yield an orthonormal basis of each tangent space $\TRSO3$.
They are obtained by translating a fixed orthonormal basis of the Lie algebra $\mathfrak{so}(3)$ to each tangent space using the two equivalent representations
\[ \TRSO3 = \mat{R}\cdot\mathfrak{so}(3) = \mathfrak{so}(3)\cdot\mat{R}, \]
corresponding to left- and right-translation, respectively.

The main contribution of this paper is a harmonic framework for tangent vector fields on $\SO3$ with respect to these two global frames, which provides the mathematical foundation for the corresponding functionality in \texttt{MTEX}.
In \Cref{lem:FrameChangeBandWidth}, we analyze the transformation between the left- and right-invariant representations of tangent vector fields as a linear operator in the frequency domain and show that, under this transformation, the harmonic bandwidth increases by at most one degree.
Both representations therefore remain compatible with finite harmonic approximations and can be used interchangeably depending on the structure of the underlying problem.
Furthermore, in \Cref{cor:GradientHarmonicSeries} and \Cref{lem:DivergenceCurl}, we derive explicit frequency domain formulas for the gradient, divergence, and curl.
These formulas allow the harmonic coefficients of the gradient component functions to be computed directly from those of the underlying scalar function, and the coefficients of the divergence and curl directly from those of the component functions of the associated vector field.
In contrast to the Euler angle induced frame, the global orthonormal left- and right-invariant frames preserve band-limitedness under these differential operators, as shown in \Cref{cor:GradientBandLimited}.

At the end of the paper, we apply the developed theoretical framework to two numerical problems.
More precisely, in~\Cref{sec:Compactification}, we address the approximation of density functions on $\SO3$ by small sets of representative rotations, formulated as an optimization problem and solved using gradient descent. This requires the efficient computation of gradients on $\SO3$.
Finally, in~\Cref{sec:BrownianMotion}, we simulate anisotropic rotational diffusion arising from Brownian motion, thereby demonstrating the modular use of the framework for the numerical simulation of evolution equations on $\SO3$ through the frequency domain evaluation of the differential operators.
Both optimal sampling and the simulation of rotational density evolution are important applications in crystallography, cf.~\cite{Eghtesad2018,Knezevic2015} and~\cite{Bunge1984,Bunge1986}, respectively, with rotational diffusion providing a drift-diffusion extension of the transport equations arising in crystal-plasticity simulations.


\section{Preliminaries}

The special orthogonal group in $\IR^3$ is defined as
\begin{equation*}
  \SO3 = \set{\mat R \in \IR^{3\times3} }{\mat R^T\mat R=\id \text{ and } \det(\mat R)=1}.
\end{equation*}
It forms a compact Lie group and can be regarded as a three-dimensional Riemannian manifold.
Every rotation $\mat R_{\vec n}(\omega) \in \SO3$ can be represented, not necessarily uniquely, by a rotation axis $\vec n\in\S{2}$ and a rotation angle $\omega \in \IT=\IR/(2\pi\IZ)$.
Furthermore, rotations can be represented using the Euler angle parametrization
\begin{equation*}
  \epsilon\colon \IT\times[0,\pi]\times\IT\to\SO3, \quad (\alpha,\beta,\gamma)\mapsto \mat R_{\vec z}(\alpha) \, \mat R_{\vec y}(\beta) \, \mat R_{\vec z}(\gamma).
\end{equation*}
This parametrization is not one-to-one and is singular at \(\beta=0\) and \(\beta=\pi\).

As a compact Lie group, $\SO3$ carries the bi-invariant Haar measure $\mu$, normalized such that $\mu(\SO3) = 8\pi^2$.
This measure naturally defines the Hilbert space $\L2SO3$, equipped with the inner product
\[ \scp{f_{1}}{f_{2}}_{\L2SO3} = \frac1{8\pi^{2}}\int_{\SO3}f_{1}(\mat q) \conj{f_{2}(\mat q)}\d{\mu(\mat q)}, \qquad \text{for all } f_{1},f_{2}\in\L2SO3 \]
and the corresponding norm $\norm{\cdot}_{\L2SO3}$.

\begin{definition}\label{def:WignerdD}
  Let $l \in \IN_0$ and $k,k' \in \IZ$ with $|k|,|k'| \leq l$.

  The $\Lp{2}$-normalized Wigner-D function of degree $l$ and orders $k,k'$ is the complex-valued function~$\WignerD{l}{k}{k'} \colon \SO3 \to \IC$ defined on a rotation $\mat R(\alpha,\beta,\gamma)$ via
  \begin{equation*}
    \WignerD{l}{k}{k'}(\mathbf{R}(\alpha,\beta,\gamma)) \coloneqq \sqrt{2l+1} \, \e^{-\i \, k \, \alpha} \, \Wignerd{l}{k}{k'}(\cos\beta) \, \e^{-\i \, k' \, \gamma}.
  \end{equation*}
  Here, $\Wignerd{l}{k}{k'} \colon [-1,1] \to \IR$ is the Wigner-d function, which depends only on the Euler angle $\beta$ and is explicitly given by
  \begin{equation*}
    \Wignerd{l}{k}{k'}(x) \coloneqq (-1)^{\nu} \binom{2n-s}{s+a}^{\frac12} \binom{s+b}{b}^{-\frac12} \left(\frac{1-x}{2}\right)^{\frac{a}2} \left(\frac{1+x}{2}\right)^{\frac{b}2} P_s^{a,b}(x),
  \end{equation*}
  with
  \[ a=|k-k'|, \quad b=|k+k'|, \quad s=l-\max\{|k|,|k'|\}, \quad \nu = \charFunc{k>k'} \cdot(k+k'), \]
  and where $P_s^{a,b}$ denotes the Jacobi polynomial of degree $s$ with parameters $a$ and $b$, see~\cite{Szegoe1975}.
\end{definition}

The following Lemma, which concerns the derivative of the Wigner-d functions, will be required later in the proof of \Cref{thm:Gradient}.
\begin{lemma}\label{lem:DerivativeWignerd}
  It holds that
  \begin{equation*}
    \frac{\d}{\d{\beta}} \Wignerd{l}{k}{k'}(\cos\beta)\Big|_{\beta=0} =
    \begin{cases}
      \frac12 \sqrt{(l+k+1)(l-k)}, 	& \text{if } k+1=k', \\
      -\frac12 \sqrt{(l-k+1)(l+k)}, 	& \text{if } k-1=k', \\
      0,															& \text{otherwise}.
    \end{cases}
  \end{equation*}
\end{lemma}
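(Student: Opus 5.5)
The plan is to work directly from the explicit formula in \Cref{def:WignerdD}, with $x=\cos\beta$. By the chain rule,
\[ \frac{\d}{\d\beta}\Wignerd{l}{k}{k'}(\cos\beta) = -\sin\beta\,\frac{\d}{\d x}\Wignerd{l}{k}{k'}(x). \]
Near $\beta=0$ we have $\frac{1-x}{2}=\sin^2(\beta/2)$ and $\frac{1+x}{2}=\cos^2(\beta/2)$, so
\[ \Wignerd{l}{k}{k'}(\cos\beta) = C\,\sin^{a}(\beta/2)\,\cos^{b}(\beta/2)\,P_s^{a,b}(\cos\beta), \]
where $C=(-1)^\nu\binom{2n-s}{s+a}^{1/2}\binom{s+b}{b}^{-1/2}$. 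I read $n$ as $l$; this is presumably a typo in the definition, and the check $l=0$ supports it.

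Step one is to use the vanishing order $a=|k-k'|$ at $\beta=0$ to kill most cases. If $a\ge 2$, the function is $O(\beta^2)$, so its derivative at $0$ vanishes. If $a=0$, it is a smooth even function of $\beta$, since every factor is even in $\beta$, so the derivative at $0$ is again $0$. Only $a=1$, i.e.\ $k'=k\pm1$, remains. In that case the derivative at $0$ equals
\[ C\cdot\tfrac12\cdot P_s^{1,b}(1), \]
because $\sin(\beta/2)\sim\beta/2$ and the other factors evaluate to $1$ and $P_s^{1,b}(1)$.

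Step two is to evaluate the remaining constant. Use $P_s^{a,b}(1)=\binom{s+a}{s}$, so $P_s^{1,b}(1)=s+1$. The constant has $\binom{2l-s}{s+1}^{1/2}\binom{s+b}{b}^{-1/2}$ with $b=|2k\pm1|$ and $s=l-\max\{|k|,|k'|\}$. Writing $m=\max\{|k|,|k'|\}$, one has $b=2m-1$ in both sign situations. With $s=l-m$, we get $2l-s=l+m$ and $s+b=l+m-1$. Then
\[ \binom{l+m}{l-m+1}\Big/\binom{l+m-1}{2m-1} = \frac{l+m}{l-m+1}, \]
since $\binom{l+m}{l-m+1}=\binom{l+m}{2m-1}=\frac{l+m}{l-m+1}\binom{l+m-1}{2m-1}$. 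Multiplying by $(s+1)^2=(l-m+1)^2$ under the root gives
\[ |\text{derivative}| = \tfrac12\sqrt{(l+m)(l-m+1)}. \]
For $k'=k+1$ with $k\ge0$ (so $m=k+1$), this is $\sqrt{(l+k+1)(l-k)}$. For $k<0$ (so $m=-k$), it gives the same expression. The case $k'=k-1$ is symmetric and yields $\sqrt{(l-k+1)(l+k)}$.

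Step three, and the main obstacle, is the sign bookkeeping through $\nu=\charFunc{k>k'}(k+k')$. For $k'=k+1$ we have $\nu=0$, giving a positive sign, as claimed. For $k'=k-1$ we have $\nu=2k-1$, which is odd, giving $-$, as claimed.

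I expect the delicate point to be the edge cases $|k|=l$ or $|k'|=l$, where the formula should give $0$. For example, with $k=l$ and $k'=l+1$ the order $k'$ is not admissible, so the case is excluded by the hypothesis. With $k=l$ and $k'=l-1$, we get $m=l$ and the factor $l-k=0$ appears appropriately. I will verify these boundary cases separately. As a cross-check, I will compare the result with the known ladder-operator formula for $J_\pm$ acting on $\WignerD{l}{k}{k'}$ at the identity.
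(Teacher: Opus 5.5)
Your proposal is correct and follows essentially the same route as the paper: the explicit Jacobi-polynomial formula (with $n$ read as $l$), a case split on $a=|k-k'|$ (the paper phrases it via the parity of $k+k'$ and a central difference quotient, with the same content), the value $P_s^{a,b}(1)=\binom{s+a}{s}$, the binomial simplification to $\frac12\sqrt{(l+m)(l-m+1)}$ with $m=\frac{b+1}{2}$, and the sign read off from $\nu$. The only slip is in your closing remark: for $k=l$, $k'=l-1$ the claimed value is $-\frac12\sqrt{2l}\neq 0$, because the factor $l-k$ belongs to the case $k'=k+1$; no separate boundary check is needed, since your general computation with $s=0$ already covers $m=l$.
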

\begin{proof}
  Assume first that $k+k'$ is even.
  In this case, $\Wignerd{l}{k}{k'}(\cdot)$ is a polynomial, and thus the composition $\Wignerd{l}{k}{k'}\circ\cos$ is differentiable.
  Moreover, using the symmetry property of the Wigner-d functions,
  \[ \Wignerd{l}{k}{k'}(\cos(-\beta)) = (-1)^{k+k'}\,\Wignerd{l}{k}{k'}(\cos\beta), \qquad \text{see~\cite{Varshalovich1988}}, \]
  it follows that $\Wignerd{l}{k}{k'}\circ\cos$ is an even function, and therefore its derivative at $\beta=0$ vanishes.

  Now assume that $k+k'$ is odd.
  By the same symmetry property, $\Wignerd{l}{k}{k'}\circ\cos$ is then an odd function.
  Differentiating via the central difference quotient yields
  \[ \frac{\d}{\d{\beta}} \Wignerd{l}{k}{k'}(\cos\beta)\Big|_{\beta=0} = \lim_{\beta\to0} \frac{\Wignerd{l}{k}{k'}(\cos\beta) - \Wignerd{l}{k}{k'}(\cos(-\beta))}{2\beta} = \lim_{\beta\to0} \frac{\Wignerd{l}{k}{k'}(\cos\beta)}{\beta}. \]

  Let $a,b,s,\nu$ be as in \Cref{def:WignerdD}, and define
  \[ C \coloneqq (-1)^{\nu} \binom{2n-s}{s+a}^{\frac12} \binom{s+b}{b}^{-\frac12}. \]
  By assumption, both $a$ and $b$ are odd, so that $\frac{a-1}2$ and $\frac{b-1}2$ are non-negative integers.
  Then
  \begin{equation*}
    \lim_{\beta\to0}\frac{\Wignerd{l}{k}{k'}(\cos\beta)}{\beta} = C \cdot \lim_{\beta\to0} \frac{\sin\beta}{2\beta} \left(\frac{1-\cos\beta}{2}\right)^{\frac{a-1}2} \left(\frac{1+\cos\beta}{2}\right)^{\frac{b-1}2} P_s^{a,b}(\cos\beta)
  \end{equation*}
  vanishes if $a=\abs{k-k'} \neq 1$.

  For the case $a=\abs{k-k'}=1$, we obtain
  \begin{equation*}
    \lim_{\beta\to0}\frac{\Wignerd{l}{k}{k'}(\cos\beta)}{\beta} = \frac{C}2 \cdot \binom{s+1}{s} = (-1)^{\nu} \frac12 \sqrt{\braces*{n+\frac{b+1}2}\braces*{n-\frac{b+1}2+1}}.
  \end{equation*}
  Finally, distinguishing the two cases $k'=k+1$ and $k'=k-1$ completes the proof.
\end{proof}

Omitting the $\sqrt{2l+1}$ normalization, the Wigner-D functions coincide with the matrix elements of the irreducible unitary representations of $\SO3$, see~\cite{Vilenkin2012}. Hence they satisfy
\begin{equation}\label{eq:RepresentationProperty}
  \WignerD{l}{k}{k'}(\mat R \mat Q) = \frac{1}{\sqrt{2l+1}} \sum_{j=-l}^l \WignerD{l}{k}{j}(\mat R) \, \WignerD{l}{j}{k'}(\mat Q).
\end{equation}

According to the Peter-Weyl theorem, the Wigner-D functions constitute a complete orthonormal basis of $\L2SO3$.
Consequently, every function $f\in\L2SO3$ admits a unique harmonic expansion of the form
\begin{equation*}
  f(\mat R) = \sum_{l=0}^{\infty}\sum_{k,k'=-l}^l \fhat{l}{k}{k'} \, \WignerD{l}{k}{k'}(\mat R),
\end{equation*}
where $\fhat{l}{k}{k'}  = \scp[\big]{f}{\WignerD{l}{k}{k'}}_{\L2SO3}$ are the harmonic coefficients of $f$.
Furthermore, the subspace of $L$-band-limited functions is defined as
\[ \Pi_{L}(\SO3) \coloneqq \mathrm{span}\set*{\WignerD{l}{k}{k'}}{(l,k,k')\in\J{L}}, \]
where $\J{L}\coloneqq\set{(l,k,k')}{l=0,\dots,L \text{ and } k,k'=-l,\dots,l}$.


\section{Tangent Bundle Frames on $\SO3$}\label{sec:TangentBundleFrames}

Since $\SO3$ is a Lie group, its associated Lie algebra
\[ \mathfrak{so}(3) = \set{\mat S \in\IR^{3\times3}}{\mat S = -\mat S^{\top}} \]
consisting of skew symmetric $3\times3$ matrices represents the geometric tangent space at the identity, i.e. $\mathcal{T}_{\id}\SO3=\mathfrak{so}(3)$.

Moreover, the Lie group structure provides two equivalent representations of the tangent space at an arbitrary rotation $\mat R\in\SO3$, obtained via the left or right translation of the Lie algebra.
Using left translation, the tangent space can be written as
\begin{equation}\label{eq:TangentSpace}
  \TRSO3 = \mat R \cdot \mathfrak{so}(3) = \set{\mat R \cdot \mat S}{\mat S \in \mathfrak{so}(3)}.
\end{equation}
Using right translation, one obtains the equivalent representation
\[ \TRSO3=\mathfrak{so}(3) \cdot \mat R.\]

Since $\SO3$ is a Riemannian manifold, it is equipped with a Riemannian metric $g$ that assigns to each $\mat R\in\SO3$ an inner product $g_{\mat R}\colon \TRSO3 \times \TRSO3 \to \IR$ defined by
\[ \RieMetric{\mat T_{1}}{\mat T_{2}}  = \frac12 \scp{\mat T_{1}}{\mat T_{2}}_{\mathrm F},\]
where $\scp{\cdot}{\cdot}_{\mathrm{F}}$ denotes the Frobenius inner product.

The tangent bundle of $\SO3$ is defined by
\[ \TSO3 \coloneqq \bigcup_{\mat R\in\SO3} \{\mat R\}\times\TRSO3. \]

\begin{definition}
  A smooth vector field on $\SO3$ is a smooth section $\vec F\colon\SO3\to\TSO3$ of the tangent bundle, i.e.,
  \[ \vec F(\mat R)\in\TRSO3 \qquad\text{for every }\mat R\in\SO3. \]
\end{definition}

By \Cref{eq:TangentSpace} the tangent bundle admits global left and right trivializations and can therefore be identified with $\SO3\times\mathfrak{so}(3)$.
Thus, $\TSO3$ is trivial.

More precisely, every smooth vector field $\vec F$ can be uniquely represented by smooth maps $X_L,X_R\colon\SO3\to\mathfrak{so}(3)$ such that
\[ \vec F(\mat R) = \mat R\,X_L(\mat R) = X_R(\mat R)\,\mat R, \qquad \mat R\in\SO3. \]
Under the corresponding trivializations, the vector field is represented by
\[ \mat R\mapsto\bigl(\mat R,X_L(\mat R)\bigr) \qquad\text{and}\qquad \mat R\mapsto\bigl(\mat R,X_R(\mat R)\bigr), \]
respectively.
Equivalently, $\SO3$ is parallelizable and admits global frames.

\begin{definition}
  Let $U\subset\SO3$ be open.
  A frame of the tangent bundle $\TSO3$ over $U$ is a triple of smooth vector fields
  \[ (\vec X_1,\vec X_2,\vec X_3), \qquad \vec X_i\colon U \to \TSO3, \]
  such that, for every $\mat R\in U$, the tangent vectors $\vec X_1(\mat R),\; \vec X_2(\mat R),\; \vec X_3(\mat R)$ form a basis of the tangent space $\TRSO3$.
  If $U=\SO3$, we call $\{\vec X_1,\vec X_2,\vec X_3\}$ a global frame.
\end{definition}

With respect to such a frame, every vector field $\vec F\colon U\to T\SO3$ can be written uniquely as
\[ \vec F(\mat R) = F_1(\mat R)\vec X_1(\mat R) + F_2(\mat R)\vec X_2(\mat R) + F_3(\mat R)\vec X_3(\mat R), \]
where \(F_1,F_2,F_3\colon U\to\IR\) are the corresponding scalar component functions.

In the following, we examine different frames for representing tangent vector fields on $\SO3$.

\subsection{The Euler Angle Induced Local Frame}\label{sec:TangentSpaceBasisEulerAngle}
At regular points of a local parametrization of a manifold, the columns of the Jacobian provide a basis of the corresponding tangent space.
Since both the Wigner-D functions and the Haar measure on $\SO3$ are naturally expressed in terms of Euler angles, the Euler angle parametrization is commonly used to construct a tangent space basis of $\TRSO3$, see~\cite{Bunge1986,Kalidindi2005,Graef2013}.
The resulting tangent vectors
can be identified with elements of the Lie algebra $\mathfrak{so}(3)$ via either left- or right-translation, i.e.
\begin{align*}
  \partial_{\alpha} \coloneqq \tfrac{\d}{\d\alpha}\epsilon\Big|_{\mat R} &=
  \mat R \cdot \left(\begin{smallmatrix}
		0 & -\cos\beta & \sin\beta\,\sin\gamma \\
		\cos\beta & 0 & \sin\beta\,\cos\gamma \\
    -\sin\beta\,\sin\gamma & -\sin\beta\,\cos\gamma & 0
  \end{smallmatrix}\right) &&=
  \left(\begin{smallmatrix}
		0 & -1 & 0 \\
 	1 & 0 & 0 \\
    0 & 0 & 0
  \end{smallmatrix}\right) \cdot \mat R, \\[2mm]
  \partial_{\beta} \coloneqq \tfrac{\d}{\d\beta}\epsilon\Big|_{\mat R} &=
  \mat R \cdot \left(\begin{smallmatrix}
		0 & 0 & \cos\gamma \\
 	0 & 0 & -\sin\gamma \\
    -\cos\gamma & \sin\gamma & 0
  \end{smallmatrix}\right) &&=
    \left(\begin{smallmatrix}
		0 & 0 & \cos\alpha \\
 	0 & 0 & \sin\alpha \\
    -\cos\alpha & -\sin\alpha & 0
  \end{smallmatrix}\right) \cdot \mat R, \\[2mm]
  \partial_{\gamma} \coloneqq \tfrac{\d}{\d\gamma}\epsilon\Big|_{\mat R} &=
  \mat R \cdot \left(\begin{smallmatrix}
		0 & -1 & 0 \\
    1 & 0 & 0 \\
    0 & 0 & 0 \\
  \end{smallmatrix}\right) &&=
  \left(\begin{smallmatrix}
		0 & -\cos\beta & \sin\alpha\,\sin\beta \\
		\cos\beta & 0 & -\cos\alpha\,\sin\beta \\
    -\sin\alpha\sin\beta & \cos\alpha\sin\beta & 0
  \end{smallmatrix}\right) \cdot \mat R .
\end{align*}

The tangent space basis depends on the Euler angles and is not orthogonal with respect to the Riemannian metric.
Moreover, as $\beta$ approaches $0$ or $\pi$, the loss of local injectivity of the Euler angle parametrization $\epsilon$ causes two of the basis vectors to become linearly dependent.
At $\beta=0$ and $\beta=\pi$, they therefore no longer form a basis of the corresponding tangent space.
Hence, this construction does not define a global frame of the tangent bundle, but only a local frame over
\[ \epsilon(\IT\times(0,\pi)\times\IT)\subset\SO3. \]
Consequently, this local tangent frame becomes numerically ill-conditioned near $\beta=0$ and $\beta=\pi$, making it inconvenient for frequency domain computations with tangent vector fields.

\subsection{Global Orthonormal Frames of $\TSO3$}\label{sec:TangentSpaceBasisStandard}

To construct a numerically more stable tangent space basis, we start from the standard basis
\begin{equation*}
  \ex \coloneqq
  \left(\begin{smallmatrix}
		0 & 0 & 0 \\
 	0 & 0 & -1 \\
    0 & 1 & 0
   \end{smallmatrix}\right) = \tfrac{\partial \mat R_{\vec x}(t)}{\partial t}\Big|_{t=0}, \qquad
 \ey \coloneqq
  \left(\begin{smallmatrix}
		0 & 0 & 1 \\
 	0 & 0 & 0 \\
    -1 & 0 & 0
   \end{smallmatrix}\right)= \tfrac{\partial \mat R_{\vec y}(t)}{\partial t}\Big|_{t=0}, \qquad
 \ez \coloneqq
  \left(\begin{smallmatrix}
		0 & -1 & 0 \\
 	1 & 0 & 0 \\
    0 & 0 & 0
   \end{smallmatrix}\right)= \tfrac{\partial \mat R_{\vec z}(t)}{\partial t}\Big|_{t=0}
\end{equation*}
of $\mathfrak{so}(3)$.
For every $\mat R\in\SO3$, left translation of these Lie algebra elements yields the tangent space basis
\begin{align*}
 \partial_{\vec x}^{L} \coloneqq \mat R \cdot \ex, \quad \partial_{\vec y}^{L} \coloneqq  \mat R \cdot \ey, \quad \partial_{\vec z}^{L} \coloneqq  \mat R \cdot \ez 
\end{align*}
of $\TRSO3$, whereas right translation yields
\begin{align*}
 \partial_{\vec x}^{R} \coloneqq \ex \cdot \mat R, \quad \partial_{\vec y}^{R} \coloneqq  \ey \cdot \mat R, \quad \partial_{\vec z}^{R} \coloneqq \ez \cdot \mat R. 
\end{align*}
Both tangent space bases are orthonormal with respect to the Riemannian metric and correspond to infinitesimal rotations around the coordinate axes.

For any tangent vector $\mat T\in\TRSO3$ with
\[ \mat T = l_{1} \partial_{\vec x}^{L} + l_{2} \partial_{\vec y}^{L} + l_{3} \partial_{\vec z}^{L} = r_{1} \partial_{\vec x}^{R} + r_{2} \partial_{\vec y}^{R} + r_{3} \partial_{\vec z}^{R} \]
the basis transform between the left- and right-translated tangent space bases is given by 
\begin{equation}\label{eq:TangentSpaceBasisChangeLeftRight}
  \left(\begin{smallmatrix} \vphantom{l}r_{1} \\ \vphantom{l}r_{2} \\ \vphantom{l}r_{3} \end{smallmatrix}\right) = \mat R \cdot \left(\begin{smallmatrix} l_{1} \\ l_{2} \\ l_{3} \end{smallmatrix}\right).
\end{equation}

Since the two tangent space bases depend smoothly on $\mat R$, they define two global orthonormal frames of the tangent bundle $\TSO3$.
More precisely, the frame given by $\partial_{\vec x}^{L}$, $\partial_{\vec y}^{L}$, and $\partial_{\vec z}^{L}$ is the standard left-invariant frame, whereas the frame given by $\partial_{\vec x}^{R}$, $\partial_{\vec y}^{R}$, and $\partial_{\vec z}^{R}$ is the standard right-invariant frame. In the literature, these frames are also referred to as the body-fixed and space-fixed frames, respectively; see, e.g.,~\cite{Breev2023}.

In the following, we show that these two global frames are not only numerically more stable than the local frame \(\set{\partial_{\alpha},\partial_{\beta},\partial_{\gamma}}{}\) introduced in \Cref{sec:TangentSpaceBasisEulerAngle}, but also lead to compact and convenient representations of the gradient of Wigner-D functions and, consequently, of band-limited harmonic series expansions on $\SO3$.


\section{Harmonic Expansion of Vector Fields on \texorpdfstring{$\SO3$}{SO(3)}}\label{sec:VectorFields}

In this section, we introduce the construction of harmonic vector fields on $\SO3$ and describe how fundamental differential operators such as the gradient, divergence, and curl can be expressed directly in terms of the harmonic coefficients, which simplifies their computation and accelerates numerical simulations.

\subsection{Global Frame Representations of Vector Fields on \texorpdfstring{$\SO3$}{SO(3)}}
Let $\vec F\colon\SO3\to\TSO3$ be a smooth vector field.
With respect to the tangent frames introduced in \Cref{sec:TangentBundleFrames}, it admits the component representations
\begin{align*}
  \vec F  &= F_{l,x}\,\partial^{L}_{\vec x} + F_{l,y}\,\partial^{L}_{\vec y} + F_{l,z}\,\partial^{L}_{\vec z}\\
          &= F_{r,x}\,\partial^{R}_{\vec x} + F_{r,y}\,\partial^{R}_{\vec y} + F_{r,z}\,\partial^{R}_{\vec z}\\
          &= F_{\alpha}\,\partial_{\alpha} + F_{\beta}\,\partial_{\beta} + F_{\gamma}\,\partial_{\gamma}.
\end{align*}
Thus, a smooth vector field $\vec F$ can equivalently be described by its component functions with respect to a chosen tangent bundle frame.
If these components lie in $\L2SO3$, they can be expanded in harmonic series. Employing Fourier techniques~\cite{Hielscher2026} then enables both efficient computation and straightforward evaluation.

For numerical computations it is important to know whether a change of tangent space representation preserves the finite-dimensional harmonic structure. The next result shows that passing between the two global orthonormal frames does not destroy band-limitedness: it increases the required harmonic bandwidth by at most one. Thus, vector fields represented in either basis can still be treated within a finite-dimensional Fourier framework.
\begin{lemma}\label{lem:FrameChangeBandWidth}
  Let $\vec F\colon\SO3\to\TSO3$ be a vector field with representations
  \begin{equation*}
    \vec F = F_{l,x}\,\partial^{L}_{\vec x} + F_{l,y}\,\partial^{L}_{\vec y} + F_{l,z}\,\partial^{L}_{\vec z}
           = F_{r,x}\,\partial^{R}_{\vec x} + F_{r,y}\,\partial^{R}_{\vec y} + F_{r,z}\,\partial^{R}_{\vec z}.
  \end{equation*}

  If the component functions $F_{l,x}, F_{l,y}, F_{l,z}$ are $L$-band-limited, then $F_{r,x}, F_{r,y}, F_{r,z}$ are $(L+1)$-band-limited.
  Conversely, if $F_{r,x}, F_{r,y}, F_{r,z}$ are $L$-band-limited, then $F_{l,x}, F_{l,y}, F_{l,z}$ are $(L+1)$-band-limited.
\end{lemma}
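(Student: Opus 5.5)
The plan is to reduce the claim to multiplying band-limited functions by the matrix entries of $\mat R$, and to show these entries are themselves band-limited of degree one. By \eqref{eq:TangentSpaceBasisChangeLeftRight}, at every $\mat R\in\SO3$ the component vectors are related by
\[ \left(\begin{smallmatrix} F_{r,x}(\mat R) \\ F_{r,y}(\mat R) \\ F_{r,z}(\mat R) \end{smallmatrix}\right) = \mat R \left(\begin{smallmatrix} F_{l,x}(\mat R) \\ F_{l,y}(\mat R) \\ F_{l,z}(\mat R) \end{smallmatrix}\right), \qquad \left(\begin{smallmatrix} F_{l,x}(\mat R) \\ F_{l,y}(\mat R) \\ F_{l,z}(\mat R) \end{smallmatrix}\right) = \mat R^{\top} \left(\begin{smallmatrix} F_{r,x}(\mat R) \\ F_{r,y}(\mat R) \\ F_{r,z}(\mat R) \end{smallmatrix}\right). \]
Each right component is therefore a sum $\sum_j R_{ij}(\mat R)\,F_{l,j}(\mat R)$, and each left component is $\sum_j R_{ji}(\mat R)\,F_{r,j}(\mat R)$. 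Since $\Pi_{L+1}(\SO3)$ is a linear space, both directions of the lemma follow from one claim: if $f\in\Pi_L(\SO3)$, then $R_{ij}\, f\in\Pi_{L+1}(\SO3)$ for every entry function $\mat R\mapsto R_{ij}$.

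The first step is to show that each entry $R_{ij}$ lies in $\Pi_1(\SO3)$. I would argue this directly from the Euler angle form $\mat R=\mat R_{\vec z}(\alpha)\mat R_{\vec y}(\beta)\mat R_{\vec z}(\gamma)$. Every entry is a linear combination of terms $\e^{\pm\i\alpha}$ or $1$, times $\cos\beta$, $\sin\beta$ or $1$, times $\e^{\pm\i\gamma}$ or $1$. In each term the frequencies in $\alpha$ and $\gamma$ are matched with the correct $\beta$-factor, so it matches one of $\WignerD{1}{k}{k'}$ (e.g.\ $R_{33}=\cos\beta\propto \WignerD{1}{0}{0}$, and $R_{13}=\cos\alpha\sin\beta$ is a combination of $\WignerD{1}{\pm1}{0}$), plus possibly a constant $\WignerD{0}{0}{0}$. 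A more conceptual alternative is that $\mat R$ acting on $\IC^3$ is a three-dimensional representation equivalent to the degree-one irreducible representation. Its matrix entries therefore lie in the span of the $\WignerD{1}{k}{k'}$, via a fixed unitary change of basis.

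The second step is the product rule: $\WignerD{1}{m}{m'}\,\WignerD{l}{k}{k'}$ lies in the span of $\WignerD{l'}{\cdot}{\cdot}$ with $|l-1|\le l'\le l+1$. I would justify this by the Clebsch–Gordan decomposition $D^1\otimes D^l\cong D^{l-1}\oplus D^l\oplus D^{l+1}$. Equivalently, the product is a matrix coefficient of the tensor product representation, which decomposes into irreducibles of degree at most $l+1$; by Peter–Weyl, its matrix coefficients lie in the span of those Wigner-D functions. The elementary alternative, if one wants to avoid representation theory, uses the Euler angle form. The $\alpha,\gamma$ frequencies add, and multiplying a polynomial-type Wigner-d function by $\cos\beta$ or $\sin\beta$ raises the Jacobi degree by at most one.

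This product step is the main obstacle, since the paper has not yet stated a product or Clebsch–Gordan formula. I would cite the standard Clebsch–Gordan series for Wigner-D functions from \cite{Varshalovich1988} rather than rederive it. Combining the two steps and summing over $j$ then gives $(L+1)$-band-limitedness in both directions.
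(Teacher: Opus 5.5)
Your proposal is correct and is essentially the paper's proof: the paper also starts from the pointwise relation $(F_{r,\cdot})=\mat R\,(F_{l,\cdot})$, writes each entry of $\mat R$ explicitly as a linear combination of degree-one Wigner-D functions, and applies the Clebsch--Gordan decomposition to the products $\WignerD{l}{k}{k'}\WignerD{1}{j}{j'}$, treating the converse via $\mat R^{\top}$. Your degree range $|l-1|\le l'\le l+1$ is also slightly more careful than the paper's ``$l-1$, $l$, $l+1$'' in the case $l=0$.
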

\begin{proof}
  The basis transform from \cref{eq:TangentSpaceBasisChangeLeftRight} implies
  \begin{equation}\label{eq:TangentFrameLeft2Right}
    \begin{pmatrix} F_{r,x}(\mat R) \\ F_{r,y}(\mat R) \\ F_{r,z}(\mat R) \end{pmatrix} = \mat R \cdot \begin{pmatrix} F_{l,x}(\mat R) \\ F_{l,y}(\mat R) \\ F_{l,z}(\mat R) \end{pmatrix}
  \end{equation}
  pointwise for all $\mat R \in \SO3$.

  Writing the rotation matrix $\mat R$ with respect to the Euler angle parametrization, each matrix entry is a linear combination of Wigner-D functions of degree $1$. Explicitly,
  \begin{align}\label{eq:RotationAsWignerD}
    \mat R = \begin{pmatrix}
      \frac{\WignerD{1}{-1}{-1} + \WignerD{1}{1}{-1} + \WignerD{1}{-1}{1} + \WignerD{1}{1}{1}}{\sqrt{12}} & \frac{-\i\WignerD{1}{-1}{-1} -\i\WignerD{1}{1}{-1} + \i\WignerD{1}{-1}{1} + \i\WignerD{1}{1}{1}}{\sqrt{12}} & \frac{\WignerD{1}{-1}{0} + \WignerD{1}{1}{0}}{\sqrt6}\\[0.5em]
      \frac{\i\WignerD{1}{-1}{-1} -\i\WignerD{1}{1}{-1} + \i\WignerD{1}{-1}{1} -\i\WignerD{1}{1}{1}}{\sqrt{12}} & \frac{\WignerD{1}{-1}{-1} -\WignerD{1}{1}{-1} - \WignerD{1}{-1}{1} + \WignerD{1}{1}{1}}{\sqrt{12}} & \frac{\i\WignerD{1}{-1}{0} - \i\WignerD{1}{1}{0}}{\sqrt6} \\[0.5em]
      \frac{\WignerD{1}{0}{-1} + \WignerD{1}{0}{1}}{\sqrt6} & \frac{-\i\WignerD{1}{0}{-1} + \i\WignerD{1}{0}{1}}{\sqrt6} & \frac{\WignerD{1}{0}{0}}{\sqrt3}
    \end{pmatrix}.
  \end{align}
  Using the assumption that the harmonic expansions of the component functions $F_{l,x}, F_{l,y}, F_{l,z}$ are $L$-band-limited, each of the functions $F_{r,x}, F_{r,y}, F_{r,z}$ can be written as linear combinations of products
  \[ \WignerD{l}{k}{k'}(\mat R)\,\WignerD{1}{j}{j'}(\mat R), \qquad l\leq L, \quad \abs{k},\abs{k'}\leq l, \quad j,j'\in\set{-1,0,1}{}. \]

  Since $\WignerD{l}{k}{k'}$ and $\WignerD{1}{j}{j'}$ are matrix elements of irreducible representations of $\SO3$, their pointwise product corresponds to the tensor product representation.
  By the Clebsch-Gordan decomposition, this product expands into a linear combination of Wigner-D functions of degrees $l-1$, $l$, and $l+1$, see \cite{Varshalovich1988}.

  Hence, the maximal harmonic degree appearing in the expansions of $F_{r,x}, F_{r,y}, F_{r,z}$ is $L+1$.

  The converse follows analogously, since $\mat R^{-1} = \mat R^\top$ and thus the same argument applies.

\end{proof}

Thus, the two representations have essentially the same harmonic bandwidth. The remaining question is how to compute the transformation between the left- and right-invariant frames in the frequency domain.

\begin{remark}
  Following the construction in the proof, the transformation between the two global orthonormal frames can also be carried out directly on the harmonic coefficients.
  More precisely, we have to insert the Wigner-D representation of the rotation matrix~\eqref{eq:RotationAsWignerD} into the tangent vector transform~\eqref{eq:TangentFrameLeft2Right} and substitute the harmonic series expansions of the component functions $F_{l,\cdot}$ and $F_{r,\cdot}$.
  Afterwards, the products of the Wigner-D functions can be written with the appropriate Clebsch-Gordan coefficients arising from the coupling
  \begin{equation*}
    \WignerD{l}{k}{k'}(\mat R)\, \WignerD{1}{j}{j'}(\mat R) = \sum_{J = l-1}^{l+1} C^{J,\,k+j}_{l,k;\,1,j} \, C^{J,\,k'+j'}_{l,k';\,1,j'} \, \WignerD{J}{k+j}{k'+j'}(\mat R).
  \end{equation*}
  This yields a complicated but computable formula for the change of tangent frame in the frequency domain.

  However, by the previous lemma, we know that the harmonic expansions of the component functions are $L+1$-band-limited after the change of tangent frame.
  Hence, we can compute them exactly by computing an adjoint $\SO3$-Fourier transform on the Clenshaw-Curtis quadrature grid, as described in detail in~\cite{Hielscher2026,Potts2009}.
  In practice, this is much faster than the explicit computation with the Clebsch-Gordon coefficients.
\end{remark}

In the following sections, we study gradients of functions on $\SO3$ as well as the divergence, curl, and antiderivative of vector fields, and express them in terms of harmonic series expansions.
In this context, the specific choice of tangent frame matters.
Assume that a function $f\colon\SO3\to\IC$ satisfies a certain symmetry with respect to left multiplication, i.e., $f(\mat Q\mat R)=f(\mat R)$ for all $\mat Q\in S\subset\SO3$.
Then the component functions of its gradient with respect to the left-invariant tangent frame inherit the same symmetry, whereas the component functions with respect to the right-invariant frame do not.
This is particularly important in crystallographic texture analysis, which motivates our investigations, since functions describing crystal-lattice-dependent properties are always invariant under left multiplication by the crystallographic symmetry group.

\subsection{The Gradient in the Harmonic Setting}\label{sec:Gradient}
Consider a smooth function $f\colon\SO3\to\IR$.
The gradient $\nabla f_{\mat R}\in\TRSO3$ at $\mat R\in\SO3$ is defined as the unique tangent vector satisfying
\begin{equation*}
  \RieMetric{\nabla f_{\mat R}}{\mat T} = \d{}_{\mat T} f(\mat R), \qquad \text{for all } \mat T\in\TRSO3,
\end{equation*}
where $\d{}_{\mat T} f(\mat R)$ denotes the derivative of $f$ at $\mat R$ along the tangent direction $\mat T$.
More concretely, let $\mat S \in \mathfrak{so}(3)$ with $\mat S = s_{1} \, \ex + s_{2} \, \ey + s_{3} \, \ez$, such that $\mat T = \mat S \cdot \mat R$. Then,
\[ \d{}_{\mat T}(f(\mat R)) = \frac{\d}{\d{t}} f\big(\exp(t \mat S)\,\mat R\big)\Bigg|_{t=0} \]
where the exponential map $\exp\colon\mathfrak{so}(3)\to\SO3$ is defined by
\[ \exp(\mat S) = \begin{cases} \id,  &\text{if } \mat S = 0, \\ \mat R_{\vec s} \braces*{\tfrac{\norm{S}_{F}}{\sqrt2}},  & \text{otherwise}. \end{cases} \]

In the following, we will represent the gradient of a smooth function $f\colon\SO3\to\IR$ as a harmonic vector field $\grad f\colon\SO3\to\TSO3$, whose representation depends on the chosen tangent bundle frame.
Since $\SO3$ is compact, the smoothness of $f$ implies that it belongs to every Sobolev space on $\SO3$ and, in particular, to $\L2SO3$.
Hence, $f$ admits a harmonic expansion.

The following theorem is the key computational result of this section. It gives the gradient of a Wigner-D function with respect to the Euler angle induced local frame and the two global orthonormal frames introduced above. The main advantage of the two global frames is that the corresponding formulas involve only shifts of the harmonic indices and preserve the harmonic degree. In contrast, the representation with respect to the Euler angle induced frame contains explicit Euler angle dependent coefficients and singular factors.

For complex-valued functions, the gradient operator is extended complex-linearly.
\begin{theorem}\label{thm:Gradient}
  The gradient of the Wigner-D function $\WignerD{l}{k}{k'}\colon \SO3 \to \IC$, expressed with respect to the three tangent bundle frames introduced above, is given by
  \renewcommand{\arraystretch}{1.2}
  \begin{align*}
    \grad \WignerD{l}{k}{k'} &=
    \begin{pmatrix*}[c]
      \WignerD{l}{k-1}{k'} \\ \WignerD{l}{k}{k'} \\ \WignerD{l}{k+1}{k'}
    \end{pmatrix*}^{\top}
    \begin{pmatrix*}[c]
      0 & -c_{l}^{-k}\,\e^{-\i\alpha} & 0 \\
      \tfrac{\i\,k'\cos\beta-\i\,k}{2\sin^{2}\beta} & 0 & \tfrac{\i\,k\cos\beta-\i\,k'}{2\sin^{2}\beta} \\
      0 & c_{l}^{k}\,\e^{\i\alpha} & 0
    \end{pmatrix*}
    \begin{pmatrix*}[c]
      \partial_{\alpha} \\ \partial_{\beta} \\ \partial_{\gamma}
    \end{pmatrix*}
    \\[0.5em] &=
    \begin{pmatrix*}[c]
      \WignerD{l}{k-1}{k'} \\ \WignerD{l}{k}{k'} \\ \WignerD{l}{k+1}{k'}
    \end{pmatrix*}^{\top}
    \begin{pmatrix*}[c]
      -\i\,c_{l}^{-k'} & c_{l}^{-k'} & 0 \\
      0 & 0 & -\i\,k' \\
      -\i\,c_{l}^{k'} & -c_{l}^{k'} & 0
    \end{pmatrix*}
    \begin{pmatrix*}[c]
      \partial^{L}_{\vec x} \\ \partial^{L}_{\vec y} \\ \partial^{L}_{\vec z}
    \end{pmatrix*}
    \\[0.5em] &=
    \begin{pmatrix*}[c]
      \WignerD{l}{k}{k'-1} \\ \WignerD{l}{k}{k'} \\ \WignerD{l}{k}{k'+1}
    \end{pmatrix*}^{\top}
    \begin{pmatrix*}[c]
      -\i\,c_{l}^{-k} & -c_{l}^{-k} & 0 \\
      0 & 0 & -\i\,k \\
      -\i\,c_{l}^{k} & c_{l}^{k}  & 0
    \end{pmatrix*}
    \begin{pmatrix*}[c]
      \partial^{R}_{\vec x} \\ \partial^{R}_{\vec y} \\ \partial^{R}_{\vec z}
    \end{pmatrix*},
  \end{align*}
  \renewcommand{\arraystretch}{1.0}
  where $\WignerD{l}{k}{k'} = 0$ for $\abs{k},\abs{k'}>l$ and $c_{l}^{k} = \tfrac12\,\sqrt{(l-k)(l+k+1)}$.
\end{theorem}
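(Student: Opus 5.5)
The plan is to compute the gradient in the two global orthonormal frames first, and only then derive the Euler-angle representation by a change of frame.

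\textbf{Step 1: reduce to directional derivatives.} Since $\partial^{L}_{\vec x},\partial^{L}_{\vec y},\partial^{L}_{\vec z}$ is orthonormal with respect to $g_{\mat R}$, the components of $\grad f$ in this frame are $\d{}_{\mat R\,\ex} f(\mat R)$, $\d{}_{\mat R\,\ey} f(\mat R)$, $\d{}_{\mat R\,\ez} f(\mat R)$. Writing $\mat R\,\mat E=(\mat R\mat E\mat R^\top)\mat R$, the defining curve becomes
\[ t\mapsto \exp(t\,\mat R\mat E\mat R^\top)\mat R=\mat R\exp(t\mat E). \]
So each left component equals $\frac{\d}{\d t}f(\mat R\,\mat R_{\vec e}(t))|_{t=0}$ with $\vec e\in\{\vec x,\vec y,\vec z\}$. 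The same argument gives each right component as $\frac{\d}{\d t}f(\mat R_{\vec e}(t)\,\mat R)|_{t=0}$.

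\textbf{Step 2: apply the representation property.} By \eqref{eq:RepresentationProperty},
\[ \WignerD{l}{k}{k'}(\mat R\,\mat R_{\vec e}(t))=\tfrac{1}{\sqrt{2l+1}}\sum_j \WignerD{l}{k}{j}(\mat R)\,\WignerD{l}{j}{k'}(\mat R_{\vec e}(t)). \]
It therefore suffices to compute the derivatives at $t=0$ of $\WignerD{l}{j}{k'}$ along the three one-parameter subgroups.

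For $\vec e=\vec z$ we have $\mat R_{\vec z}(t)=\epsilon(t,0,0)$. Its derivative is $-\i k'\sqrt{2l+1}\,\delta_{jk'}$, which gives the entry $-\i k'$.

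For $\vec e=\vec y$ we have $\mat R_{\vec y}(t)=\epsilon(0,t,0)$. \Cref{lem:DerivativeWignerd} gives nonzero terms only for $j=k'\pm1$. After reindexing (with the symmetry $d^l_{k'\pm1,k'}$ versus $d^l_{k',k'\pm1}$), the coefficients become $c_l^{\pm k'}$ with the stated signs.

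For $\vec e=\vec x$ I will use $\mat R_{\vec x}(t)=\mat R_{\vec z}(-\pi/2)\mat R_{\vec y}(t)\mat R_{\vec z}(\pi/2)=\epsilon(-\pi/2,t,\pi/2)$. This adds phase factors $\e^{\pm\i\pi/2}=\pm\i$ to the $\vec y$ result, which produces the $-\i c$ entries.

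The right frame is handled identically, using the first slot of \eqref{eq:RepresentationProperty}. This shifts $k$ instead of $k'$ and explains the sign pattern in the third matrix.

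\textbf{Step 3: convert to the Euler frame.} From the matrices listed in \Cref{sec:TangentSpaceBasisEulerAngle}, I will write $\partial^R_{\vec x},\partial^R_{\vec y},\partial^R_{\vec z}$ in terms of $\partial_\alpha,\partial_\beta,\partial_\gamma$ by inverting the right-translated expressions. We have $\partial_\alpha=\partial^R_{\vec z}$ and $\partial_\beta=-\sin\alpha\,\partial^R_{\vec x}+\cos\alpha\,\partial^R_{\vec y}$, and $\partial_\gamma$ is the listed combination. Solving this system introduces $1/\sin\beta$ factors.

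Alternatively, one can compute the gradient directly as $G^{-1}(\partial_\alpha f,\partial_\beta f,\partial_\gamma f)$, where $G$ is the metric tensor with $g_{\alpha\alpha}=g_{\gamma\gamma}=1$, $g_{\alpha\gamma}=\cos\beta$ and $g_{\beta\beta}=1$. Its inverse carries $1/\sin^2\beta$, which matches the middle row of the claimed matrix. The $\partial_\beta$-coefficient is $\partial_\beta\WignerD{l}{k}{k'}$. To express it through $\WignerD{l}{k\pm1}{k'}$, I will combine the right-frame formula with $\partial_\beta=-\sin\alpha\,\partial^R_{\vec x}+\cos\alpha\,\partial^R_{\vec y}$. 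The index shifts in $k$ then absorb the factors $\e^{\mp\i\alpha}$.

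\textbf{Main obstacle.} I expect the hardest part to be bookkeeping of signs and conventions. This includes the sign conventions of $\WignerD{l}{k}{k'}$ (the factor $\e^{-\i k\alpha}$ and $\nu$), the Wigner-d symmetry needed to transfer \Cref{lem:DerivativeWignerd} to the other index ordering, and the phases in the $\vec x$ case. I will verify each of these against the case $l=1$ using \eqref{eq:RotationAsWignerD}.
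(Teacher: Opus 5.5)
\textbf{Invariant frames.} Your Steps 1--2 are the paper's proof. Orthonormality turns the components into derivatives along one-parameter subgroups. Then \eqref{eq:RepresentationProperty} moves the derivative onto the factor evaluated at $\mat R_{\vec e}(t)$, and \Cref{lem:DerivativeWignerd} together with $\mat R_{\vec x}(t)=\epsilon(-\pi/2,t,\pi/2)$ supplies the entries. The paper writes out the right frame and calls the left one ``analogous''. Your identity $\exp(t\,\mat R\mat E\mat R^\top)\mat R=\mat R\exp(t\mat E)$ fills in the step hidden in that word. No symmetry of $d^l$ is needed for $\vec e=\vec y$, since $c_l^{k'-1}=c_l^{-k'}$ and $c_l^{-k'-1}=c_l^{k'}$.

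\textbf{Euler frame.} The paper only quotes a coordinate formula from Varshalovich. You instead derive the Euler-frame representation from the metric tensor $G$ and the right-frame result, which is self-contained and preferable. Use the $G^{-1}$ variant: pure inversion of the frame change leaves $b_\gamma$ as a combination of $\WignerD{l}{k\pm1}{k'}/\sin\beta$, which needs a three-term recurrence not in the paper to collapse.

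\textbf{The gap.} You claim that $G^{-1}$ ``matches the middle row of the claimed matrix''. It does not. Your (correct) $G$ gives
\[ b_\alpha=\tfrac{\i k'\cos\beta-\i k}{\sin^{2}\beta}\,\WignerD{l}{k}{k'},\qquad b_\gamma=\tfrac{\i k\cos\beta-\i k'}{\sin^{2}\beta}\,\WignerD{l}{k}{k'}, \]
not the printed entries with denominator $2\sin^2\beta$. The printed version cannot be proved at all, because the displays are mutually inconsistent:
\begin{itemize}
\item $\partial_\alpha=\partial^{R}_{\vec z}$, $\partial_\beta$ has no $\partial^{R}_{\vec z}$-part, and $\partial_\gamma$ has $\partial^{R}_{\vec z}$-part $\cos\beta$.
\item So the $\partial^{R}_{\vec z}$-component of the first display is $b_\alpha+\cos\beta\,b_\gamma=-\tfrac{\i k}{2}\WignerD{l}{k}{k'}$.
\item This contradicts the $-\i k\,\WignerD{l}{k}{k'}$ of the third display, which both you and the paper derive.
\end{itemize}

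The paper's proof misses this because its quoted formula puts a factor $\tfrac12$ on all three terms. That is the gradient for the unnormalized Frobenius metric, not for $g_{\mat R}$. It also contradicts the stated $\partial_\beta$ column, which equals $\partial_\beta\WignerD{l}{k}{k'}$ with no $\tfrac12$, exactly as your route via $\partial_\beta=-\sin\alpha\,\partial^{R}_{\vec x}+\cos\alpha\,\partial^{R}_{\vec y}$ yields.

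Similarly, your Step 2 produces $\WignerD{l}{k}{k'\pm1}$ in the left frame and $\WignerD{l}{k\pm1}{k'}$ in the right frame. This is the reverse of the printed index vectors. It is, however, consistent with the printed coefficients ($c_l^{\pm k'}$, $-\i k'$ on the left; $c_l^{\pm k}$, $-\i k$ on the right), with the paper's own right-frame computation, and with \Cref{cor:GradientHarmonicSeries}.

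So your argument establishes a corrected theorem: denominators $\sin^2\beta$ in the Euler row, and the index vectors of the last two displays interchanged. State these corrections explicitly instead of asserting agreement with the statement as written.
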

\begin{proof}
  The gradient with respect to the Euler angle induced frame $\set{\partial_{\alpha},\partial_{\beta},\partial_{\gamma}}{}$ is stated in~\cite{Varshalovich1988}. More precisely, the explicit Euler angle derivatives of $\WignerD{l}{k}{k'}$ are inserted into the natural coordinate representation of the Riemannian gradient, i.e.,
  \begin{equation*}
    \grad \WignerD{l}{k}{k'} =
    \partial_{\alpha}\,\tfrac1{2\sin^{2}\beta}\left(\tfrac{\partial \WignerD{l}{k}{k'}}{ \partial \alpha}-\cos\beta\, \tfrac{\partial \WignerD{l}{k}{k'}}{ \partial \gamma}\right)
    + \partial_{\beta}\, \tfrac12 \tfrac{\partial \WignerD{l}{k}{k'}}{\partial \beta}
    + \partial_{\gamma}\, \tfrac1{2\sin^{2}\beta}\left(\tfrac{\partial \WignerD{l}{k}{k'}}{ \partial \gamma}-\cos\beta\, \tfrac{\partial \WignerD{l}{k}{k'}}{ \partial \alpha}\right).
  \end{equation*}

  Since the frame $\set{\partial^{R}{\vec x},\partial^{R}{\vec y},\partial^{R}{\vec z}}{}$ is orthonormal, the components of the gradient are given by the corresponding directional derivatives.
  Hence, by definition of the frame, we obtain
  \begin{equation*}
    \nabla \WignerD{l}{k}{k'} (\mat R) =
    \braces*{\frac{\d}{\d{t}}\WignerD{l}{k}{k'}(\mat R_{\vec x}(t)\cdot \mat R)\Bigg|_{t=0}} \, \partial^{R}_{\vec x} +
    \braces*{\frac{\d}{\d{t}}\WignerD{l}{k}{k'}(\mat R_{\vec y}(t)\cdot \mat R)\Bigg|_{t=0}} \, \partial^{R}_{\vec y} +
    \braces*{\frac{\d}{\d{t}}\WignerD{l}{k}{k'}(\mat R_{\vec z}(t)\cdot \mat R)\Bigg|_{t=0}} \, \partial^{R}_{\vec z}.
  \end{equation*}
  For $\vec \eta\in\S2$, the representation property~\eqref{eq:RepresentationProperty} yields
  \begin{equation*}
    \frac{\d}{\d{t}} \WignerD{l}{k}{k'}\big(\mat R_{\vec \eta}(t)\cdot\mat R\big)\Big|_{t=0}
    = \frac{1}{\sqrt{2l+1}}\,\sum_{j=-l}^l  \frac{\d}{\d{t}} \WignerD{l}{k}{j}\big(\mat R_{\vec \eta}(t)\big)\Big|_{t=0} \cdot \WignerD{l}{j}{k'}(\mat R).
  \end{equation*}
  \begin{enumerate}
    \item For $\vec \eta = \vec z$ we have $\mat R_{\vec z}(t) = \mat R(t,0,0)$, and hence
          \[ \WignerD{l}{k}{j}(\mat R_{\vec z}(t)) = \sqrt{2l+1}\,\e^{-\i kt}\,\Wignerd{l}{k}{j}(1) = \sqrt{2l+1}\,\e^{-\i kt}\, \charFunc{k=j} \]
          where $\charFunc{\cdot}$ denotes the characteristic function.
          Differentiating with respect to $t$ then yields
          \[ \frac{\d}{\d{t}} \WignerD{l}{k}{j}\big(\mat R_{\vec z}(t)\big)\Big|_{t=0} = -\i\,k\, \sqrt{2l+1} \, \charFunc{k=j}. \]
    \item For $\vec \eta = \vec y$, we have $\mat R_{\vec y}(t) = \mat R(0,t,0)$, which implies
          \[ \WignerD{l}{k}{j}(\mat R_{\vec y}(t)) = \sqrt{2l+1}\,\Wignerd{l}{k}{j}(\cos t). \]
          Differentiating with respect to \(t\) and using the derivative identity from \Cref{lem:DerivativeWignerd} yields
          \[ \frac{\d}{\d{t}} \WignerD{l}{k}{j}\big(\mat R_{\vec y}(t)\big)\Big|_{t=0} = c_l^k \, \sqrt{2l+1} \cdot \charFunc{k+1=j} - c_l^{-k} \, \sqrt{2l+1} \cdot \charFunc{k-1=j}. \]
    \item For $\vec \eta = \vec x$, we have $\mat R_{\vec x}(t) = \mat R(\frac32\pi,t,\frac{\pi}2)$, which implies
          \[ \WignerD{l}{k}{j}(\mat R_{\vec x}(t)) = \sqrt{2l+1}\, \i^k \, (-\i)^j \,\Wignerd{l}{k}{j}(\cos t). \]
          Differentiating with respect to \(t\) and using the derivative identity from \Cref{lem:DerivativeWignerd} yields
          \[ \frac{\d}{\d{t}} \WignerD{l}{k}{j}\big(\mat R_{\vec x}(t)\big)\Big|_{t=0} = -\i\, c_l^k \, \sqrt{2l+1} \cdot \charFunc{k+1=j} - \i\, c_l^{-k} \, \sqrt{2l+1} \cdot \charFunc{k-1=j}. \]
  \end{enumerate}

  The representation with respect to the frame $\set{\partial^{L}_{\vec x},\partial^{L}_{\vec y},\partial^{L}_{\vec z}}{}$ follows analogously.
\end{proof}

By linearity, the identities in \Cref{thm:Gradient} extend directly from individual Wigner-D functions to arbitrary harmonic expansions. This yields explicit coefficient formulas for the components of the gradient.

\begin{corollary}\label{cor:GradientHarmonicSeries}
  Let $f\colon\SO3\to\IR$ be a smooth function with harmonic coefficients $\big(\fhat{l}{k}{k'}\big)_{(l,k,k')\in\J{\infty}}$. 
  Then, the gradient of $f$ is given by
  \renewcommand{\arraystretch}{1.2}
  \begin{align*}
    \nabla f (\mat R) &=
    \begin{pmatrix}
      \ds\sum_{l=0}^{\infty}\sum_{k,k'=-l}^l \left( \tfrac{\i\,k'\cos\beta-\i\,k}{2\sin^{2}\beta}\fhat{l}{k}{k'} \right) \, \WignerD{l}{k}{k'}(\mat R) \\
      \ds\sum_{l=0}^{\infty}\sum_{k,k'=-l}^l \left( c_l^{-k} \, \e^{\i\alpha} \, \fhat{l}{k-1}{k'} - c_l^k \, \e^{-\i\alpha} \, \fhat{l}{k+1}{k'}\right) \, \WignerD{l}{k}{k'}(\mat R) \\
      \ds\sum_{l=0}^{\infty}\sum_{k,k'=-l}^l \left( \tfrac{\i\,k\cos\beta-\i\,k'}{2\sin^{2}\beta}\fhat{l}{k}{k'} \right) \, \WignerD{l}{k}{k'}(\mat R)
    \end{pmatrix}^{\top}
    \begin{pmatrix*}[c]
      \partial_{\alpha} \\ \partial_{\beta} \\ \partial_{\gamma}
    \end{pmatrix*}
    \\[0.5em] &=
    \begin{pmatrix}
      \ds\sum_{l=0}^{\infty}\sum_{k,k'=-l}^l \left(-\i \, c_l^{-k'} \, \fhat{l}{k}{k'-1} - \i \, c_l^{k'} \, \fhat{l}{k}{k'+1}\right) \, \WignerD{l}{k}{k'}(\mat R) \\
      \ds\sum_{l=0}^{\infty}\sum_{k,k'=-l}^l \left(-c_l^{-k'} \, \fhat{l}{k}{k'-1} + c_l^{k'} \, \fhat{l}{k}{k'+1}\right) \, \WignerD{l}{k}{k'}(\mat R) \\
      \ds\sum_{l=0}^{\infty}\sum_{k,k'=-l}^l \left(-\i\,k'\,\fhat{l}{k}{k'}\right) \, \WignerD{l}{k}{k'}(\mat R)
    \end{pmatrix}^{\top}
    \begin{pmatrix*}[c]
      \partial^{L}_{\vec x} \\ \partial^{L}_{\vec y} \\ \partial^{L}_{\vec z}
    \end{pmatrix*}
    \\[0.5em] &=
    \begin{pmatrix}
      \ds\sum_{l=0}^{\infty}\sum_{k,k'=-l}^l \left(-\i \, c_l^{-k} \, \fhat{l}{k-1}{k'} - \i \, c_l^k \, \fhat{l}{k+1}{k'}\right) \, \WignerD{l}{k}{k'}(\mat R) \\
      \ds\sum_{l=0}^{\infty}\sum_{k,k'=-l}^l \left(c_l^{-k} \, \fhat{l}{k-1}{k'} - c_l^k \, \fhat{l}{k+1}{k'}\right) \, \WignerD{l}{k}{k'}(\mat R) \\
      \ds\sum_{l=0}^{\infty}\sum_{k,k'=-l}^l \left(-\i\,k\,\fhat{l}{k}{k'}\right) \, \WignerD{l}{k}{k'}(\mat R)
    \end{pmatrix}^{\top}
    \begin{pmatrix*}[c]
      \partial^{R}_{\vec x} \\ \partial^{R}_{\vec y} \\ \partial^{R}_{\vec z},
    \end{pmatrix*}
  \end{align*}
  \renewcommand{\arraystretch}{1.0}
  where $\WignerD{l}{k}{k'}(\mat R) = 0$ for $\abs{k},\abs{k'}>l$ and $c_{l}^{k} = \tfrac12\,\sqrt{(l-k)(l+k+1)}$.
\end{corollary}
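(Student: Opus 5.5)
The plan is to derive the corollary from \Cref{thm:Gradient} by linearity and then re-index the sums. Since $f$ is smooth, its harmonic series converges absolutely and uniformly together with all derivatives, because the coefficients decay faster than any power of $l$. Hence $\nabla$ may be applied termwise:
\[ \nabla f=\sum_{l=0}^{\infty}\sum_{k,k'=-l}^{l}\fhat{l}{k}{k'}\,\nabla\WignerD{l}{k}{k'}. \]
To justify this, I would bound the sup norms of $\WignerD{l}{k}{k'}$ by $\sqrt{2l+1}$ and the frame coefficients in \Cref{thm:Gradient} by $c_l^{\pm k}\le \tfrac12(l+1)$. Then the differentiated series converges uniformly in each of the two global frames, and this suffices.

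Next I would insert the three representations from \Cref{thm:Gradient} and collect, for each frame vector, the coefficient in front of a fixed $\WignerD{l}{k}{k'}$. Take the right-invariant frame. The term $\fhat{l}{k}{k'}\nabla\WignerD{l}{k}{k'}$ contributes $-\i c_l^{-k}\fhat{l}{k}{k'}\WignerD{l}{k-1}{k'}$ and $-\i c_l^{k}\fhat{l}{k}{k'}\WignerD{l}{k+1}{k'}$ to the $\partial^R_{\vec x}$ component. Shifting the summation index $k\mapsto k+1$ in the first sum and $k\mapsto k-1$ in the second yields terms $\fhat{l}{k+1}{k'}\WignerD{l}{k}{k'}$ and $\fhat{l}{k-1}{k'}\WignerD{l}{k}{k'}$. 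The coefficients become $c_l^{-(k+1)}$ and $c_l^{k-1}$ respectively.

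The key algebraic identity is $c_l^{-(k+1)}=\tfrac12\sqrt{(l+k+1)(l-k)}=c_l^{k}$, and similarly $c_l^{k-1}=c_l^{-k}$. This reproduces exactly the stated coefficients $-\i c_l^{-k}\fhat{l}{k-1}{k'}-\i c_l^{k}\fhat{l}{k+1}{k'}$. The $\partial^R_{\vec y}$ and $\partial^R_{\vec z}$ components, as well as the left-invariant frame (with $k'$ shifted instead of $k$), follow the same way. In the Euler frame, the $\e^{\pm\i\alpha}$ factors are carried along unchanged. The diagonal $\partial_\alpha$ and $\partial_\gamma$ entries need no shift at all.

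The main obstacle is the bookkeeping at the boundary of the index range: after shifting, the sums run over $k$ such that $k\pm1$ may exceed $l$. I would handle this with the convention that $\fhat{l}{k}{k'}=0$ (equivalently $\WignerD{l}{k}{k'}=0$) whenever $\abs{k}>l$ or $\abs{k'}>l$, and with the observation that $c_l^{l}=c_l^{-l}=0$. The latter ensures that the terms that would have been pushed outside $[-l,l]$ carry zero weight, so the re-indexed sum may be written over the full range $-l\le k\le l$.

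In the Euler-frame representation, one must additionally note that the coefficients contain $\beta$-dependent singular factors. These factors are therefore only valid on the open set $\epsilon(\IT\times(0,\pi)\times\IT)$, and the termwise argument there is made pointwise rather than uniformly.
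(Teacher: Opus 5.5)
Your proposal is correct and follows the paper's route. The paper simply applies \Cref{thm:Gradient} termwise by linearity; your justification of termwise differentiation and your re-indexing via $c_l^{-(k+1)}=c_l^{k}$ and $c_l^{k-1}=c_l^{-k}$ fill in what the paper leaves implicit. You use $k$-shifts for the right-invariant frame and $k'$-shifts for the left-invariant one, which matches the proof of \Cref{thm:Gradient}, even though the row vectors displayed for the two global frames in that theorem's statement are interchanged.

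One slip: $c_l^{-l}=\tfrac12\sqrt{2l}\neq0$ for $l\ge1$. The boundary coefficients that actually vanish are $c_l^{l}=c_l^{-(l+1)}=0$. In any case, the conventions $\fhat{l}{k}{k'}=0$ and $\WignerD{l}{k}{k'}=0$ for $|k|>l$ or $|k'|>l$ already remove the out-of-range terms, so your conclusion stands.
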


The coefficient formulas in \cref{cor:GradientHarmonicSeries} show in particular that, with respect to the two global orthonormal tangent bundle frames, differentiation does not increase the harmonic degree. This gives the following bandwidth-preservation property.

\begin{corollary}\label{cor:GradientBandLimited}
  With respect to the frames $\set{\partial^{L}_{\vec x},\partial^{L}_{\vec y},\partial^{L}_{\vec z}}{}$ and $\set{\partial^{R}_{\vec x},\partial^{R}_{\vec y},\partial^{R}_{\vec z}}{}$, the components of the gradient of an $L$-band-limited function $f\colon\SO3\to\IC$ are again $L$-band-limited.
\end{corollary}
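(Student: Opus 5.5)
The plan is to read the statement directly off \Cref{cor:GradientHarmonicSeries} (equivalently, \Cref{thm:Gradient} combined with linearity), applied to a finite expansion. Let $f\in\Pi_L(\SO3)$, so that $\fhat{l}{k}{k'}=0$ whenever $l>L$. Then
\[ f=\sum_{(l,k,k')\in\J{L}}\fhat{l}{k}{k'}\,\WignerD{l}{k}{k'} \]
is a finite sum. Since the gradient is linear (extended complex-linearly to complex-valued functions), we get
\[ \nabla f=\sum_{(l,k,k')\in\J{L}}\fhat{l}{k}{k'}\,\nabla\WignerD{l}{k}{k'}. \]
No convergence issue arises, because the sum is finite.

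Next we insert the second and third representations of \Cref{thm:Gradient}. In the left-invariant frame $\set{\partial^{L}_{\vec x},\partial^{L}_{\vec y},\partial^{L}_{\vec z}}{}$, each component of $\nabla\WignerD{l}{k}{k'}$ is a linear combination, with constant coefficients $-\i\,c_l^{\pm k'}$, $\pm c_l^{\pm k'}$ and $-\i k'$, of the functions $\WignerD{l}{k-1}{k'}$, $\WignerD{l}{k}{k'}$ and $\WignerD{l}{k+1}{k'}$. These are Wigner-D functions of the same degree $l$, and any with an out-of-range index are interpreted as zero. Each lies in $\Pi_l(\SO3)\subset\Pi_L(\SO3)$.

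The right-invariant frame works the same way, with the shifted functions $\WignerD{l}{k}{k'\pm1}$ and the constants $c_l^{\pm k}$ and $-\i k$. The important point is that these coefficients do not depend on $\mat R$, unlike the Euler-frame entries $\e^{\pm\i\alpha}$ and $1/\sin^2\beta$. Consequently each component function is a finite linear combination of elements of $\Pi_L(\SO3)$, and that space is a linear subspace.

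The one delicate step is making sure the boundary shifts never leave the admissible index range, that is, never produce something like $\WignerD{l}{l+1}{k'}$ that is not genuinely zero. This is handled by the factor $c_l^{k}=\tfrac12\sqrt{(l-k)(l+k+1)}$. It vanishes at $k=l$, so the term with $\WignerD{l}{l+1}{k'}$ carries coefficient zero. Likewise $c_l^{-k}$ vanishes at $k=-l$, which kills the term with index $k-1=-l-1$. The convention $\WignerD{l}{k}{k'}=0$ for $\abs{k},\abs{k'}>l$ stated in \Cref{thm:Gradient} covers this in any case. Hence each component of $\nabla f$ lies in $\Pi_L(\SO3)$ for both global frames, which proves the claim.
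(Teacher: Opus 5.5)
Your proposal is correct and is essentially the paper's argument: the paper reads the claim directly off the coefficient formulas of \Cref{cor:GradientHarmonicSeries} (i.e., \Cref{thm:Gradient} plus linearity). In both global frames the gradient only shifts the orders, with rotation-independent coefficients, and never changes the degree $l$. Your boundary paragraph is harmless but unnecessary, since the zero convention already settles the bandwidth question. Note also that pairing $c_l^{k}$ with the shift $k\to k+1$ matches the index structure of \Cref{cor:GradientHarmonicSeries} (right frame) rather than \Cref{thm:Gradient} as printed, where the $k$-shifts carry $c_l^{\pm k'}$.
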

Moreover, the harmonic coefficients of the gradient components can be obtained directly in Fourier space from the harmonic coefficients of $f$.

This bandwidth preservation is the main computational advantage of the two global orthonormal frames. If a rotation-dependent function is represented by a finite harmonic expansion, its gradient can be computed directly from its harmonic coefficients without increasing the prescribed bandwidth. Thus, no numerical differentiation, no treatment of Euler angle singularities, and no additional projection step are required. This property is used in~\cref{sec:Compactification} for the gradient-based construction of representative rotations and later for the harmonic construction of the anisotropic rotational Smoluchowski operator.

When expressed in the local frame $\set{\partial_{\alpha},\partial_{\beta},\partial_{\gamma}}{}$, the harmonic coefficients of the gradient components are not readily accessible, since the component formulas involve explicit Euler angle factors. In principle, they can be computed by expanding the components into harmonic series using numerical quadrature in combination with a fast adjoint $\SO3$ Fourier transform, see~\cite{Hielscher2026,Potts2009}. This approach, however, does not preserve band-limitedness and therefore leads to unavoidable truncation errors in practical computations.

\subsection{Differential operators on \texorpdfstring{$\SO3$}{SO(3)}}\label{sec:DifferentialOperators}

By construction, the Wigner-D functions are the eigenfunctions of the Laplace-Beltrami operator, i.e.
\[ \laplaceB \WignerD{l}{k}{k'} = -l(l+1) \WignerD{l}{k}{k'}. \]
Consequently, for any smooth function $f$ with harmonic coefficients $\big(\fhat{l}{k}{k'}\big)_{(l,k,k')\in\J{\infty}}$, the Laplace-Beltrami operator acts diagonally on the harmonic expansion, i.e.
\[ \laplaceB f = \sum_{l=0}^{\infty}\sum_{k,k'=-l}^{l} (-l(l+1))\fhat{l}{k}{k'} \, \WignerD{l}{k}{k'}. \]

Following~\cite{Varshalovich1988}, we obtain the following formulas for the divergence and curl of smooth vector fields represented in the left- or right-invariant frame.
\begin{lemma}\label{lem:DivergenceCurl}
  Let $\vec F\colon\SO3\to\TSO3$ be a smooth vector field with representations
  \begin{equation*}
    \vec F = F_{l,x}\,\partial^{L}_{\vec x} + F_{l,y}\,\partial^{L}_{\vec y} + F_{l,z}\,\partial^{L}_{\vec z} = F_{r,x}\,\partial^{R}_{\vec x} + F_{r,y}\,\partial^{R}_{\vec y} + F_{r,z}\,\partial^{R}_{\vec z}.
  \end{equation*}
  Then the divergence of $\vec F$ is given by
  \begin{align*}
    \div\vec F &= \braces[\big]{\grad F_{l,x}}_{l,x} + \braces[\big]{\grad F_{l,y}}_{l,y} + \braces[\big]{\grad F_{l,z}}_{l,z} \\
               &= \braces[\big]{\grad F_{r,x}}_{r,x} + \braces[\big]{\grad F_{r,y}}_{r,y} + \braces[\big]{\grad F_{r,z}}_{r,z},
  \end{align*}
  and its curl is given by
  \begin{align*}
    \curl\vec F &=
    \begin{pmatrix*}
      \braces[\big]{\grad F_{l,y}}_{l,z} - \braces[\big]{\grad F_{l,z}}_{l,y} + F_{l,x} \\
      \braces[\big]{\grad F_{l,z}}_{l,x} - \braces[\big]{\grad F_{l,x}}_{l,z} + F_{l,y} \\
      \braces[\big]{\grad F_{l,x}}_{l,y} - \braces[\big]{\grad F_{l,y}}_{l,x} + F_{l,z}
    \end{pmatrix*}^{\top}
    \begin{pmatrix*}
      \partial^{L}_{\vec x} \\ \partial^{L}_{\vec y} \\ \partial^{L}_{\vec z}
    \end{pmatrix*}
    \\
    &=
    \begin{pmatrix*}
      \braces[\big]{\grad F_{r,y}}_{r,z} - \braces[\big]{\grad F_{r,z}}_{r,y} - F_{r,x} \\
      \braces[\big]{\grad F_{r,z}}_{r,x} - \braces[\big]{\grad F_{r,x}}_{r,z} - F_{r,y} \\
      \braces[\big]{\grad F_{r,x}}_{r,y} - \braces[\big]{\grad F_{r,y}}_{r,x} - F_{r,z}
    \end{pmatrix*}^{\top}
    \begin{pmatrix*}
      \partial^{R}_{\vec x} \\ \partial^{R}_{\vec y} \\ \partial^{R}_{\vec z}
    \end{pmatrix*}.
  \end{align*}
  Here, $\braces[\big]{\grad F_{l,x}}_{l,x}$ denotes the $x$-component of the gradient of the scalar component function $F_{l,x}$ with respect to the left-invariant frame.
\end{lemma}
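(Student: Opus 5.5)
The plan is to work intrinsically with the two global orthonormal frames and use two facts: the Haar measure is invariant under the flows of these frames, and their Lie brackets are given by the structure constants of $\mathfrak{so}(3)$. For a frame $(\vec X_1,\vec X_2,\vec X_3)$ equal to either $(\partial^{L}_{\vec x},\partial^{L}_{\vec y},\partial^{L}_{\vec z})$ or $(\partial^{R}_{\vec x},\partial^{R}_{\vec y},\partial^{R}_{\vec z})$, orthonormality gives $\vec X_i f = \bigl(\grad f\bigr)_i$ for every smooth $f$. This is exactly the directional-derivative identity already used in the proof of \Cref{thm:Gradient}.

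\textbf{Divergence.} Write $\vec F=\sum_i F_i\vec X_i$ and use the product rule
\[ \div(F_i\vec X_i) = \vec X_i F_i + F_i\,\div\vec X_i . \]
It then suffices to show $\div\vec X_i=0$. The flow of $\partial^{R}_{\vec \eta}$ is $\mat R\mapsto \exp(t\,\mathbf e_{\vec\eta})\mat R$, a left translation, and the flow of $\partial^{L}_{\vec\eta}$ is a right translation. Since $\mu$ is bi-invariant, both flows preserve the Riemannian volume, and the divergence of a volume-preserving field vanishes. Alternatively, integrate by parts against a test function $g$: we have
\[ \int \vec X_i g \,\d\mu = \frac{\d}{\d t}\Big|_{t=0}\int g(\text{flow}_t)\,\d\mu = 0 , \]
so $\vec X_i$ is skew-adjoint, and therefore $\div \vec X_i = 0$. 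Summing over $i$ gives both divergence formulas.

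\textbf{Curl.} I would define $\curl\vec F=\bigl(\star\, \d\, \vec F^{\flat}\bigr)^{\sharp}$ for the orientation in which the frame is positively oriented. Let $\theta^1,\theta^2,\theta^3$ be the dual coframe, so that $\vec F^\flat=\sum_i F_i\theta^i$ and
\[ \d\vec F^\flat=\sum_{i,j}(\vec X_jF_i)\,\theta^j\wedge\theta^i+\sum_i F_i\,\d\theta^i . \]
The first sum, after applying $\star$ with $\star(\theta^1\wedge\theta^2)=\theta^3$ and its cyclic versions, produces exactly the terms $\bigl(\grad F_{y}\bigr)_z-\bigl(\grad F_{z}\bigr)_y$ and their cyclic analogues. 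For the second sum, apply Cartan's formula $\d\theta^k(\vec X_i,\vec X_j)=-\theta^k([\vec X_i,\vec X_j])$ together with the brackets. Since $\ex\ey-\ey\ex=\ez$ cyclically, the left-invariant fields satisfy $[\partial^L_{\vec x},\partial^L_{\vec y}]=\mat R[\ex,\ey]=\partial^L_{\vec z}$. For the right-invariant fields the sign flips, $[\partial^R_{\vec x},\partial^R_{\vec y}]=-\partial^R_{\vec z}$, because right translation is an anti-homomorphism. Thus $\d\theta^k=\mp\,\theta^i\wedge\theta^j$ for cyclic $(i,j,k)$, so $\star\,\d\theta^k=\mp\theta^k$. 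Combined with the orientation convention, this produces the zeroth-order terms $+F_{l,\cdot}$ in the left frame and $-F_{r,\cdot}$ in the right frame.

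\textbf{Main obstacle.} The delicate step is the bookkeeping of signs. These depend on the bracket convention for vector fields versus matrix commutators, on the sign in Cartan's formula, and above all on the orientation convention implicit in the paper's curl, the one used in \cite{Varshalovich1988}. I would fix these conventions by checking one case directly, namely $\vec F=\partial^{L}_{\vec x}$, whose curl must be $+\partial^L_{\vec x}$ by the claimed formula. Then I would verify that the same orientation yields the $-F_{r,\cdot}$ term for the right frame. This is consistent because $\mat R\in\SO3$ in \eqref{eq:TangentSpaceBasisChangeLeftRight} preserves orientation, while the bracket sign reverses. A cross-check of the whole formula is that $\div\curl=0$ should follow from the divergence formula and the commutation relations of the $\vec X_i$.
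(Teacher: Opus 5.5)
Your route differs from the paper's. The paper gives no derivation of this lemma and simply takes the formulas from \cite{Varshalovich1988}; you derive them intrinsically.

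For the divergence, the frame fields are Killing fields: their flows are right or left translations, which preserve the bi-invariant metric and the Haar measure. Hence $\div\vec X_i=0$ and $\div(F_i\vec X_i)=\vec X_iF_i=(\grad F_i)_i$, and this part is complete.

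For the curl, applying $\star\,\mathrm{d}$ to $\sum_i F_i\theta^i$ and using the Maurer--Cartan relations is the right mechanism. Your bracket computations $[\partial^{L}_{\vec x},\partial^{L}_{\vec y}]=\partial^{L}_{\vec z}$ and $[\partial^{R}_{\vec x},\partial^{R}_{\vec y}]=-\partial^{R}_{\vec z}$ are correct. Over the citation, your approach explains the zeroth-order terms as the structure constants of $\mathfrak{so}(3)$ and accounts for their opposite signs in the two frames. It also makes explicit the orientation convention that the paper leaves implicit.

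One sign claim is wrong as written, however. With $\star(\theta^1\wedge\theta^2)=\theta^3$, the $\theta^2\wedge\theta^3$-coefficient of $\sum_{i,j}(\vec X_jF_i)\,\theta^j\wedge\theta^i$ is $\vec X_2F_3-\vec X_3F_2$. So the first slot of the first-order part is $(\grad F_{z})_{y}-(\grad F_{y})_{z}$, the negative of the paper's expression. Likewise, $\star\,\mathrm{d}\theta^1=-\theta^1$ in the left frame yields $-F_{l,x}$, not $+F_{l,x}$. Hence, for the positively oriented frame, $(\star\,\mathrm{d}\vec F^\flat)^\sharp$ is exactly minus the paper's curl, in both frames.

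The paper's formulas correspond to the reversed orientation $\star(\theta^1\wedge\theta^2)=-\theta^3$, i.e.\ $\curl\vec F=-(\star\,\mathrm{d}\vec F^\flat)^\sharp$ for the positively oriented frame. With that single choice, the first-order terms and the zeroth-order terms $+F_{l,\cdot}$ and $-F_{r,\cdot}$ all come out as claimed. The same orientation serves both frames, since by \eqref{eq:TangentSpaceBasisChangeLeftRight} the transition matrix between them is $\mat R$ with $\det\mat R=1$.

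Your calibration with $\vec F=\partial^{L}_{\vec x}$ would force exactly this choice, so the slip is repairable. Still, fix the convention at the outset rather than asserting that the first sum matches under $\star(\theta^1\wedge\theta^2)=\theta^3$. Note also that the $\div\curl\vec F=0$ cross-check tests only the relative sign between first- and zeroth-order terms, not the overall orientation.
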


Combining this lemma with the harmonic representation of the gradient derived in \Cref{cor:GradientHarmonicSeries} immediately yields corresponding frequency domain representations of the divergence and curl of a smooth vector field.
Hence, both operators can be computed directly from the harmonic coefficients of the component functions.
Moreover, if the component functions of $\vec F$ are $L$-band-limited, then $\div\vec F$ and the component functions of $\curl\vec F$ are also $L$-band-limited.

Furthermore, for gradient vector fields, the underlying scalar potential can be reconstructed, up to an additive constant, directly from the frequency domain representation of the gradient.


\section{Applications and Numerical Experiments}

The harmonic framework for vector fields on $\SO3$ developed in the previous sections is a central component of \texttt{MTEX}~7.0~\cite{MTEX}.
In this section, we illustrate its numerical advantages through two examples.
The first, presented in \Cref{sec:Compactification}, concerns the construction of small representative rotation sets for rotation distribution functions and demonstrates the efficient computation of gradients of functions on $\SO3$ directly from their harmonic coefficients.
The second example, presented in \Cref{sec:BrownianMotion}, demonstrates how the harmonic representations of vector fields and differential operators can be combined as modular building blocks for the numerical solution of evolution equations on $\SO3$, using anisotropic rotational diffusion as a representative example.

\subsection{Optimal Sampling on $\SO3$}\label{sec:Compactification}

Let $f\colon \SO3 \to \IR$ be an density function of bandwidth $L$ on the rotation group, with harmonic coefficients $\big(\fhat{l}{k}{k'}\big)_{(l,k,k')\in\J{L}}$, i.e., $f \geq 0$ and $\int_{\SO3} f(\mat q)\d{\mu(\mat q)} = 8\pi^{2}$.

Given a prescribed number $M\in\IN$, the optimal sampling problem consists of finding a set of rotations $\mathcal R_M = \set{\mat R_1,\dots,\mat R_M}{}\subset \SO3$ whose associated discrete measure $8\pi^2\sum_{m=1}^M \omega_{m}\delta_{\mat R_m}$
approximates the measure $\d\nu = f(\mat q)\d{\mu(\mat q)}$ as accurately as possible.
Such reduced rotation sets are used, for example, as compact representations of rotation density functions in crystal-plasticity simulations~\cite{Eghtesad2018,Knezevic2014,Knezevic2016} and for the reduction of large rotation data sets, see, e.g.,~\cite{Wright1990,Baudin1993,Baudin1995,Pospiech1994}.

Different formulations of this problem have been proposed by Knezevic et al.~\cite{Knezevic2015,Knezevic2024} and by Gräf et al.~\cite{Graef2012,Graef2013,Dick2023}. The former optimize both the rotations and their weights by matching harmonic coefficients, whereas the latter consider equally weighted point sets and minimize a worst-case quadrature error in a reproducing kernel Hilbert space. Both approaches lead to similar optimization problems on $\SO3$, which are solved by conjugate-gradient methods using the gradient of the corresponding objective functional.
In the existing formulations, the gradient required for this optimization is expressed with respect to the tangent bundle frame induced by the Euler angle parametrization, as detailed in \Cref{sec:TangentSpaceBasisEulerAngle}.

In this paper, we follow the optimal sampling formulation of Gräf et al., but compute the gradient with respect to the global orthonormal frame introduced in \Cref{sec:TangentSpaceBasisStandard} and solve the resulting optimization problem by steepest descent.
With respect to this frame, the harmonic coefficients of the gradient components are obtained directly from those of the underlying scalar objective function, without requiring numerical differentiation or increasing the harmonic bandwidth, cf. \Cref{cor:GradientHarmonicSeries} and \ref{cor:GradientBandLimited}.

More precisely, following~\cite{Graef2012,Graef2013}, we consider the objective functional $\mathcal E\colon \SO3^M\to\IR$, given by
\begin{equation}\label{eq:MiniFunctionalE}
  \mathcal{E}(\mathcal R_{M}) = \sum_{l=0}^{L}\sum_{k,k'=-l}^{l} \lambda_{l} \abs*{ \frac{1}{M} \sum_{m=1}^{M} \conj{\WignerD{l}{k}{k'}(\mat R_{m})} - \fhat{l}{k}{k'} }^{2},
\end{equation}
where the coefficients $\lambda_{l}$ specify the discrepancy kernel used to measure the approximation quality of the point set $\mathcal R_{M}$.
More precisely, the associated discrepancy measures, in an averaged sense, how well the discrete measure reproduces the mass of metric balls in $\SO3$, taken over all centers and radii.

The gradient of $\mathcal{E}$ with respect to the $m$-th rotation can be written as
\begin{equation*}
  \nabla^{\mat R_{m}} \mathcal{E}(\mathcal R_{M})  = \frac2M \mathrm{Re}\bigl(\nabla C(\mat R_{m})\bigr),
\end{equation*}
where the scalar band-limited function $C\colon\SO3\to\mathbb C$ is given by
\begin{equation*}
  C(\mat R) = \sum_{l=0}^{L}\sum_{k,k'=-l}^{l} \lambda_{l} \left(\frac1M\sum_{j=1}^{M}\conj{\WignerD{l}{k}{k'}(\mat R_{j})} - \fhat{n}{k}{l}\right)   \WignerD{l}{k}{k'}(\mat R).   
\end{equation*}
The harmonic coefficients of $C$ are obtained from the current point set $\mathcal{R}_{M}$ by an adjoint $\SO3$-Fourier transform~\cite{Potts2009,Kostelec2008,Risbo1996,Hielscher2026} and multiplication with the kernel coefficients $\lambda_{l}$. Consequently, each gradient-descent step consists of forming the harmonic coefficients of $C$, computing the harmonic coefficients of the components of $\nabla C$ by \Cref{cor:GradientHarmonicSeries}, and evaluating the resulting vector field at the current rotation set $\mathcal R_{M}$ through a $\SO3$-Fourier transform.

Afterwards, the rotations are updated in the negative Riemannian gradient according to
\[ \mat R_{m}^{\text{new}} = \exp_{\mat R_{m}} \braces*{ - s \cdot \nabla^{\mat R_{m}}\mathcal{E}(\mathcal R_{M})} , \qquad \text{for } m=1,\dots,M, \]
where the step size $s>0$ is determined using an Armijo line search.

As a numerical example, we approximate the density shown in \Cref{fig:ODF3d} by $M=1\,024$ equally weighted representative rotations, minimizing the discrepancy $\mathcal{E}$ with bandwidth $L=32$.
Starting from an initial point set randomly sampled from the density function, cf.~\cite{Hielscher2013}, we perform $10\,000$ gradient-descent steps using the harmonic gradient computation described above.
The resulting optimized rotation set is shown in \Cref{fig:Compactification3d}.

To assess the quality of the point set, we reconstruct an density $\tilde f$ by kernel density estimation, cf.~\cite{Hielscher2013,Hielscher2010,Silverman2018,Schaeben2016}, using a de la Vall\'{e}e Poussin kernel with halfwidth $5.3^{\circ}$.
The reconstructed density is shown in \Cref{fig:DensityEstimation3d}.
The optimized point set yields the relative reconstruction error
\[ \frac{\norm{\tilde f-f}_{\L2SO3}}{\norm{f}_{\L2SO3}} \approx 0.057. \]
For comparison, a reconstruction based on $1\,024$ rotations randomly sampled according to the density function yields a relative error of $0.152$, even when the kernel halfwidth is chosen to minimize the reconstruction error.

To illustrate the reconstruction performance of the optimal sampling procedure, we study in \Cref{fig:SamplingError} the relative reconstruction error for the density function considered above as a function of the number of sample points.

\begin{figure}[H]
  \centering
  \begin{subfigure}[t]{0.32\textwidth}
    \centering
    \includegraphics[width = 0.98\textwidth]{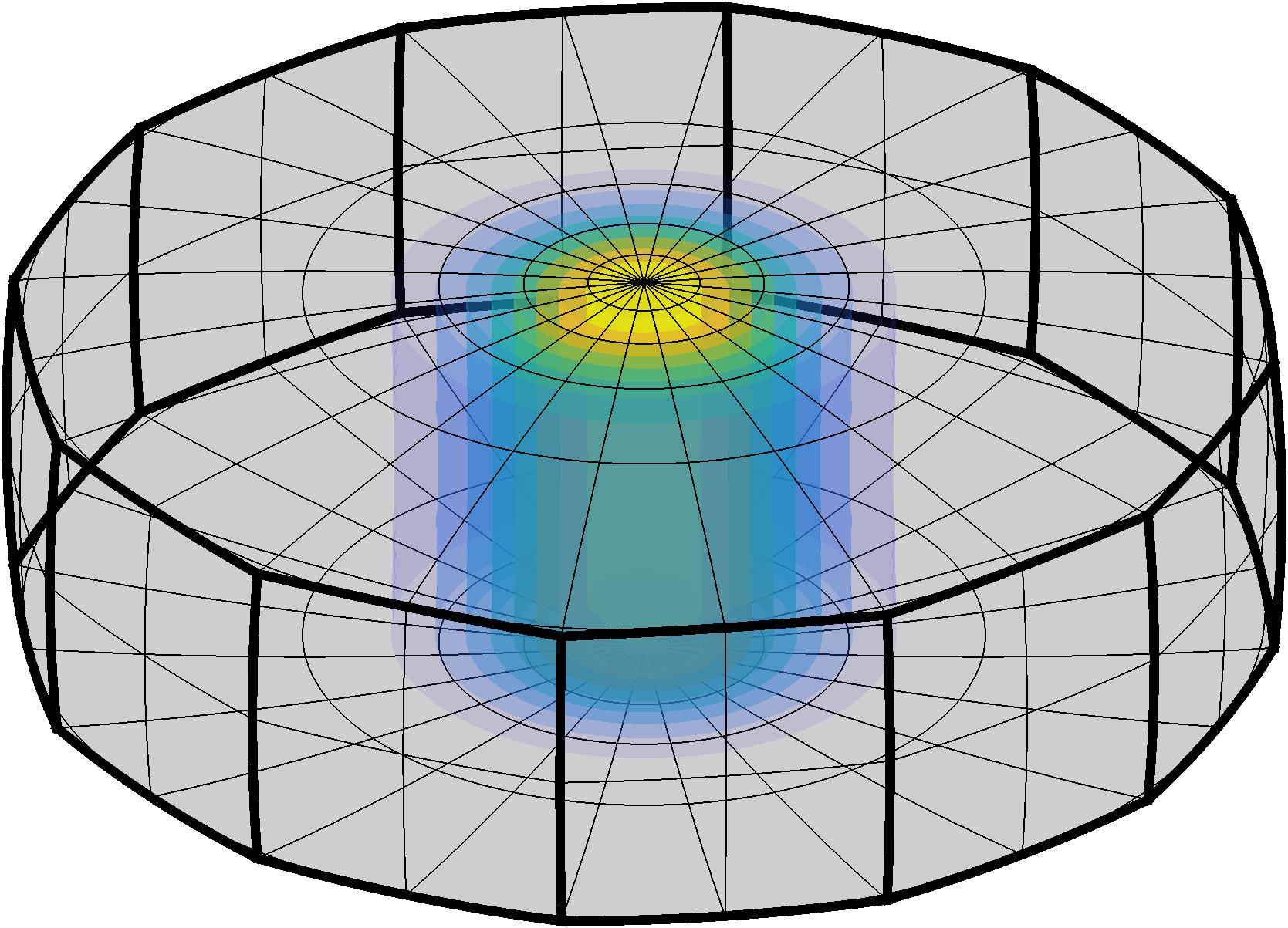}
    \subcaption{Target density}\label{fig:ODF3d}
  \end{subfigure}
  \begin{subfigure}[t]{0.32\textwidth}
    \centering
    \includegraphics[width = 0.98\textwidth]{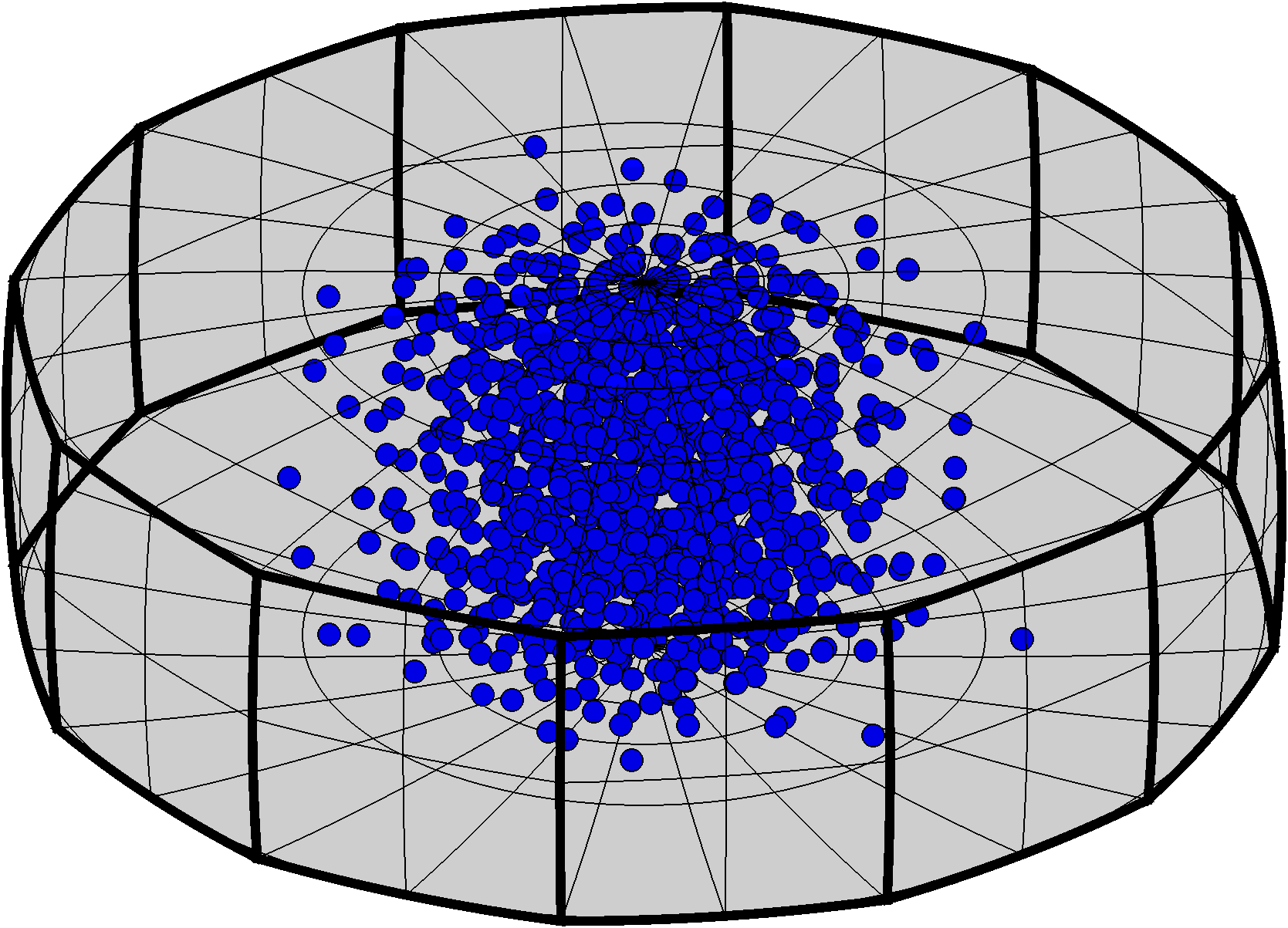}
    \subcaption{Optimized point set}\label{fig:Compactification3d}
  \end{subfigure}
  \begin{subfigure}[t]{0.32\textwidth}
    \centering
    \includegraphics[width = 0.98\textwidth]{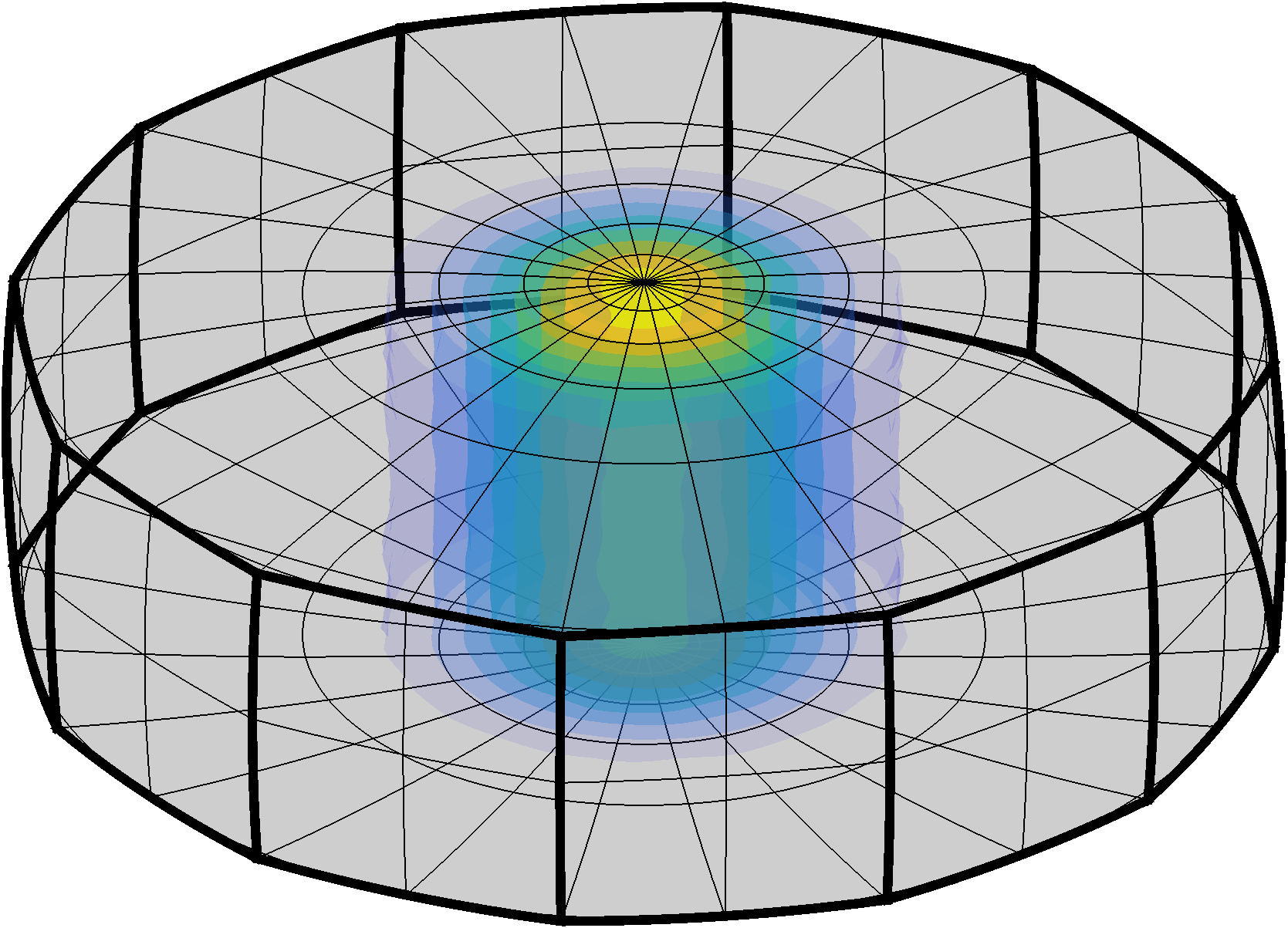}
    \subcaption{Reconstructed density}\label{fig:DensityEstimation3d}
  \end{subfigure}
  \caption{Visualization of the optimal-sampling experiment in the three-dimensional axis-angle representation of rotation space. Only the fundamental sector corresponding to the $622$ crystal symmetry is shown.}\label{fig:CompactificationODF3d}
\end{figure}

\definecolor{optred}{RGB}{200,55,55}
\definecolor{randblue}{RGB}{20,80,210}
\definecolor{boxblue}{RGB}{30,90,200}

\pgfplotstableread{pictures/OptimalSampling/SamplingError.txt}\samplingdata
\begin{figure}[H]
  \centering
  \begin{tikzpicture}
    \begin{axis}[
      width=13cm,
      height=8cm,
      xlabel={Number of rotations},
      ylabel={Relative error},
      boxplot/draw direction=y,
      xmin=2.5,
      xmax=11.5,
      ymin=0,
      ymax=0.6,
      ytick={0,0.1,...,0.8},
      xtick={3,4,5,6,7,8,9,10,11},
      xticklabels={32,64,128,256,512,1024,2048,4096,8192},
      grid=major,
      legend pos=north east,
      boxplot/every box/.style={draw=boxblue,fill=boxblue,fill opacity=0.15,thin},
      boxplot/every median/.style={draw=boxblue,semithick},
      boxplot/every whisker/.style={draw=black,semithick}
      ]

      \pgfplotsinvokeforeach{2,...,10}{
        \pgfmathtruncatemacro{\xpos}{#1+1}
        \pgfplotstablegetelem{#1}{Q1}\of\samplingdata
        \edef\qone{\pgfplotsretval}
        \pgfplotstablegetelem{#1}{Median}\of\samplingdata
        \edef\med{\pgfplotsretval}
        \pgfplotstablegetelem{#1}{Q3}\of\samplingdata
        \edef\qthree{\pgfplotsretval}
        \pgfplotstablegetelem{#1}{LowerWhisker}\of\samplingdata
        \edef\lowerwhisker{\pgfplotsretval}
        \pgfplotstablegetelem{#1}{UpperWhisker}\of\samplingdata
        \edef\upperwhisker{\pgfplotsretval}
        \edef\boxplotcommand{
          \noexpand\addplot[
          boxplot prepared={
            draw position=\xpos,
            lower whisker=\lowerwhisker,
            lower quartile=\qone,
            median=\med,
            upper quartile=\qthree,
            upper whisker=\upperwhisker,
            box extend=0.5
          },
          forget plot
          ]
          coordinates {};
        }
        \boxplotcommand
      }

      \addplot[optred,thick,mark=*,mark size=2.2pt,mark options={solid,draw=optred,fill=optred},restrict x to domain=3:11]
      table[x expr={\coordindex+1},y=ErelOpt]{\samplingdata};

      \addplot[randblue,thick,dashed,mark=triangle*,mark size=2.5pt,mark options={solid,draw=randblue,fill=randblue},restrict x to domain=3:11]
      table[x expr={\coordindex+1},y=MeanRand]{\samplingdata};

      \legend{Optimal sampling,Random sampling}
    \end{axis}
  \end{tikzpicture}
  \caption{Relative reconstruction error as a function of the number of rotations for optimal sampling and random sampling according to the density function.
    For each point set, the kernel halfwidth is chosen to minimize the reconstruction error.
    For random sampling, the boxplots show the distribution of the reconstruction errors over $100$ randomly generated point sets, while the dashed curve indicates the corresponding mean error.}\label{fig:SamplingError}
\end{figure}

\subsection{Anisotropic Rotational Diffusion}\label{sec:BrownianMotion}

As a second application, we demonstrate how the harmonic vector-field framework developed above provides a modular approach to the numerical simulation of evolution equations on $\SO3$.
The frequency domain representations of the gradient and divergence, together with algebraic operations on tangent vector fields, provide the basic building blocks for a broad class of drift, diffusion, and transport equations.
Such equations arise, for example, in rotational Brownian motion~\cite{Favro1960,Coffey2011,Tarroni1991} and texture evolution~\cite{Morawiec2004,Bunge1984,Bunge1986,Kalidindi2005}.
Here, we illustrate this approach for anisotropic rotational diffusion.

We consider the rotational Brownian motion of an asymmetric rigid molecule in an orienting medium~\cite{Tarroni1991}.
Its orientation is described by a rotation $\mat R \in \SO3$, and its time-dependent rotation probability density is represented by
\[ f \colon \IR_{+} \times \SO3 \to \IR.\]
The dynamics combines anisotropic rotational diffusion with a systematic drift induced by an rotation-dependent potential $U \colon \SO3 \to \mathbb{R}$.

Following the model studied by Tarroni and Zannoni~\cite{Tarroni1991}, the rotational diffusion is characterized by three diffusion coefficients $D_x$, $D_y$, and $D_z$ associated with the molecule-fixed principal axes.
Accordingly, we define the diffusion tensor $\mathcal D$ of a vector field $\vec F = F_x \partial_{\vec x}^{L} + F_y \partial_{\vec y}^{L} + F_z \partial_{\vec z}^{L}$ with respect to the left-invariant tangent frame by
\[ \mathcal D \vec F = D_x F_x \partial_{\vec x}^{L} + D_y F_y \partial_{\vec y}^{L} + D_z F_z \partial_{\vec z}^{L}. \]
The corresponding anisotropic rotational Smoluchowski equation~\cite{Meirovitch2019} reads
\begin{equation}\label{eq:RotationalDiffusion}
  \partial_t f = \div \left[ \mathcal D \left( \nabla f + \beta f \nabla U \right) \right],
\end{equation}
where $\beta > 0$ is a constant controlling the strength of the drift induced by the orienting potential.
Here, $\div(\mathcal D \nabla f)$ describes anisotropic rotational diffusion, whereas $\beta \div(\mathcal D(f\nabla U))$ accounts for the potential-induced drift.

Tarroni and Zannoni derived an explicit frequency domain representation of the rotational diffusion operator by computing the entries of the corresponding operator matrix.
The framework developed here provides an alternative, modular construction from the underlying differential and algebraic operations.
In particular, the anisotropic Smoluchowski operator is assembled by computing the gradient and divergence in the frequency domain, cf.~\Cref{sec:VectorFields}, and by weighting the vector field components with the corresponding diffusion coefficients.
This avoids a problem-specific derivation of the complete operator matrix and allows the same procedure to be applied directly to different diffusion tensors and orienting potentials.

We now simulate the relaxation of an equilibrium rotation distribution after a sudden change of the preferred direction of the orienting medium.
For a unit vector $\vec n\in\mathbb S^2$, we consider the potential
\begin{equation*}
  U_{\vec n}(\mat R) = -\frac{1}{2} \left( 3 (\vec n^{\top} \mat R \vec z)^{2}-1 \right) -0.3 \left( (\vec n^{\top} \mat R \vec x)^2 - (\vec n^{\top} \mat R \vec y)^2 \right).
\end{equation*}
The initial probability density is chosen as
\begin{equation*}
  f_0(\mat R) = \frac1C \, \e^{-\beta U_{\vec z}(\mat R)}, \qquad C = \int_{\SO3} \e^{-\beta U_{\vec z}(\mat Q)} \d{\mu(\mat Q)}.
\end{equation*}
At $t=0$, the preferred direction is changed from $\vec z$ to $\mat R_{\vec y}(70^\circ)\vec z$, and \Cref{eq:RotationalDiffusion} is subsequently solved with the potential $U_{\mat R_{\vec y}(70^\circ)\vec z}$.

We use $(D_x,D_y,D_z)=(0.15,0.4,1)$, $\beta=2$, and approximate the solution by a harmonic expansion of bandwidth $L=32$.
The equation is integrated over $t\in[0,1]$ using $3000$ explicit Euler steps.
Selected snapshots of the resulting relaxation process are shown in \Cref{fig:RotationalDiffusion}.
The snapshots illustrate the transient rearrangement of the rotation distribution toward the new equilibrium distribution.
A video of the full time evolution underlying these snapshots is available in the accompanying repository, see the Code Availability section below.

\begin{figure}[H]
  \centering
  \begin{subfigure}[t]{0.3\textwidth}
    \centering
    \includegraphics[width = \textwidth]{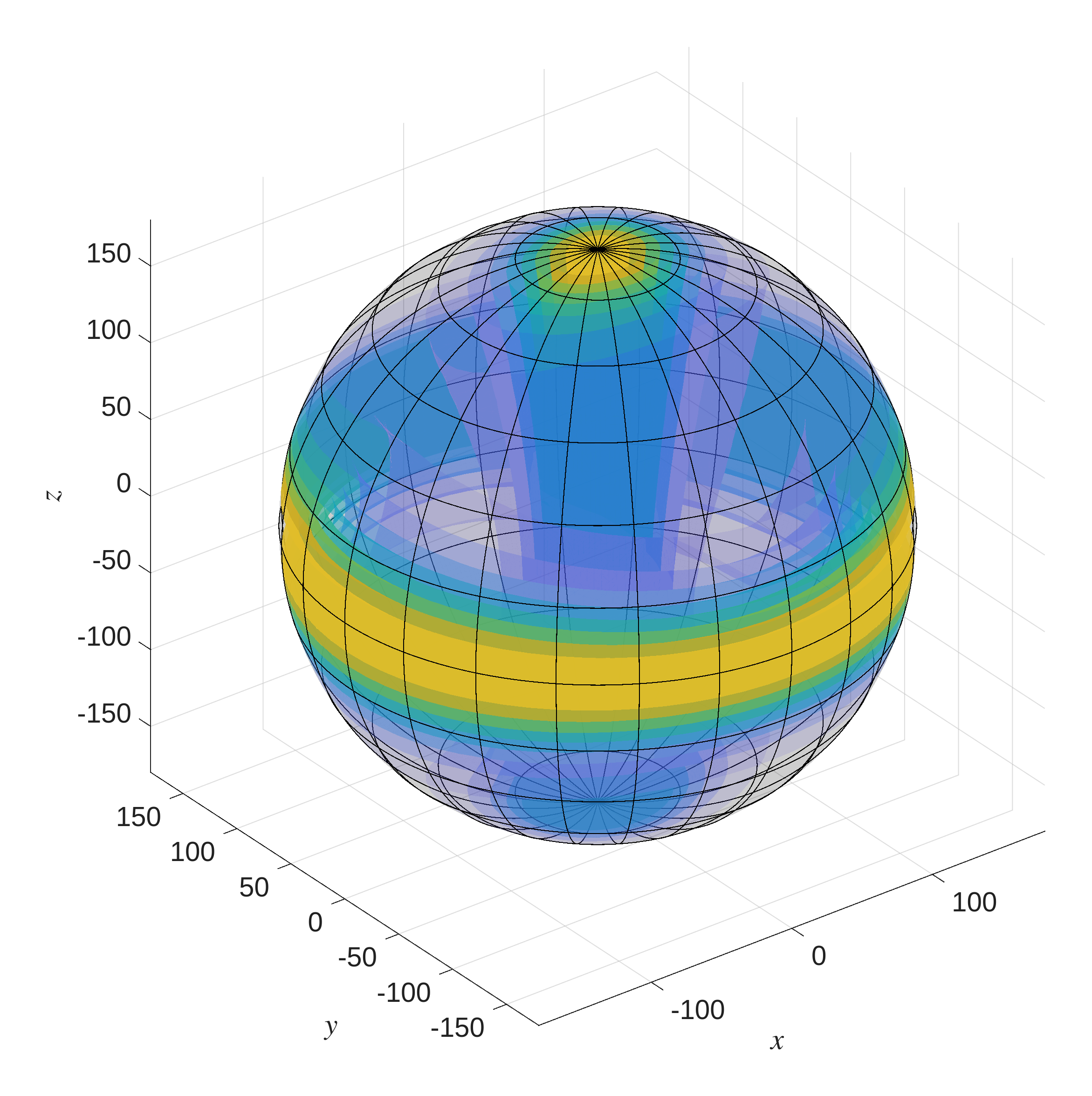}
    \subcaption{Rotation density ($t=0$)}
  \end{subfigure}
  \begin{subfigure}[t]{0.3\textwidth}
    \centering
    \includegraphics[width = \textwidth]{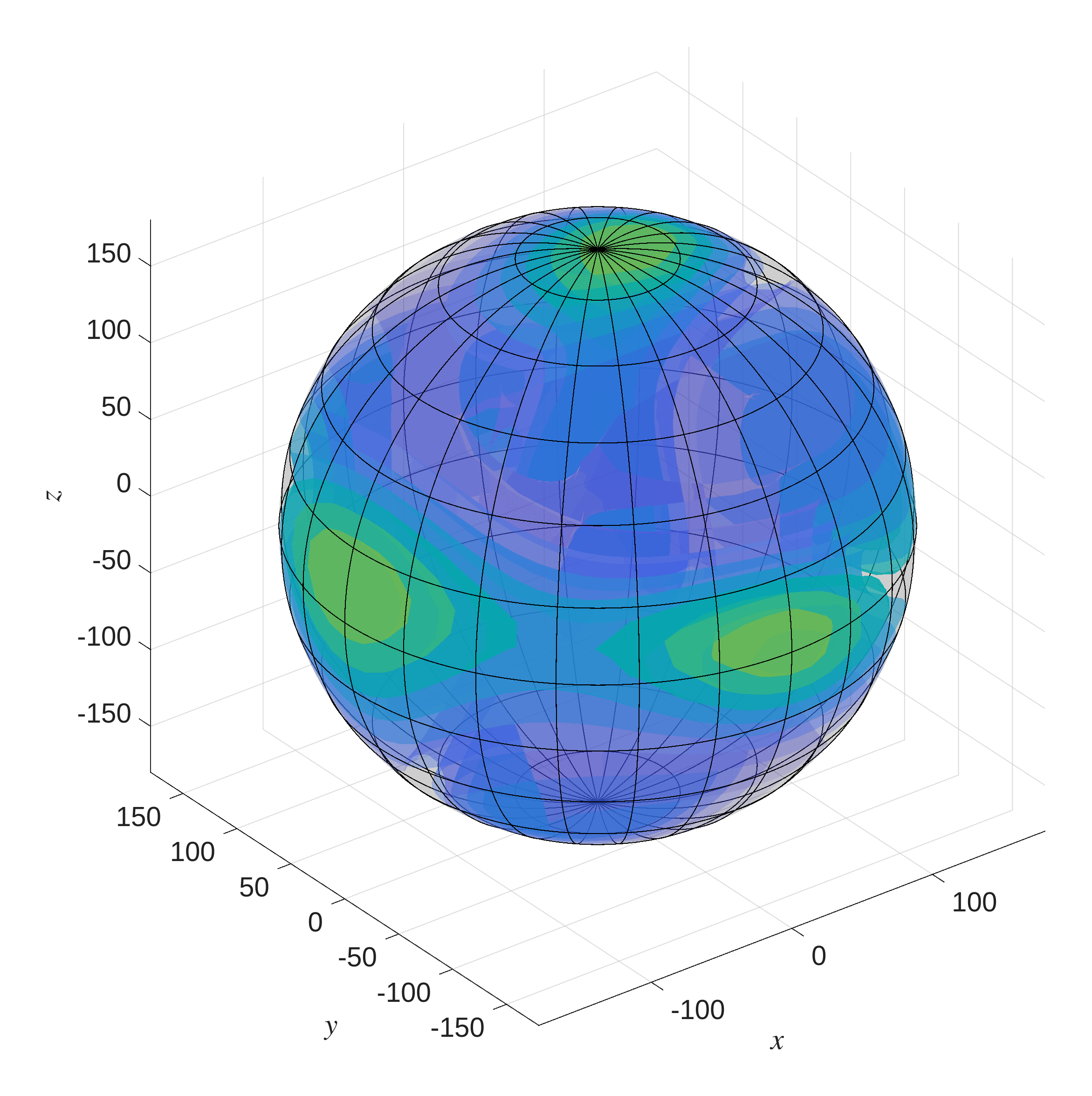}
    \subcaption{$600$ time steps ($t=0.2$)}
  \end{subfigure}
  \begin{subfigure}[t]{0.3\textwidth}
    \centering
    \includegraphics[width = \textwidth]{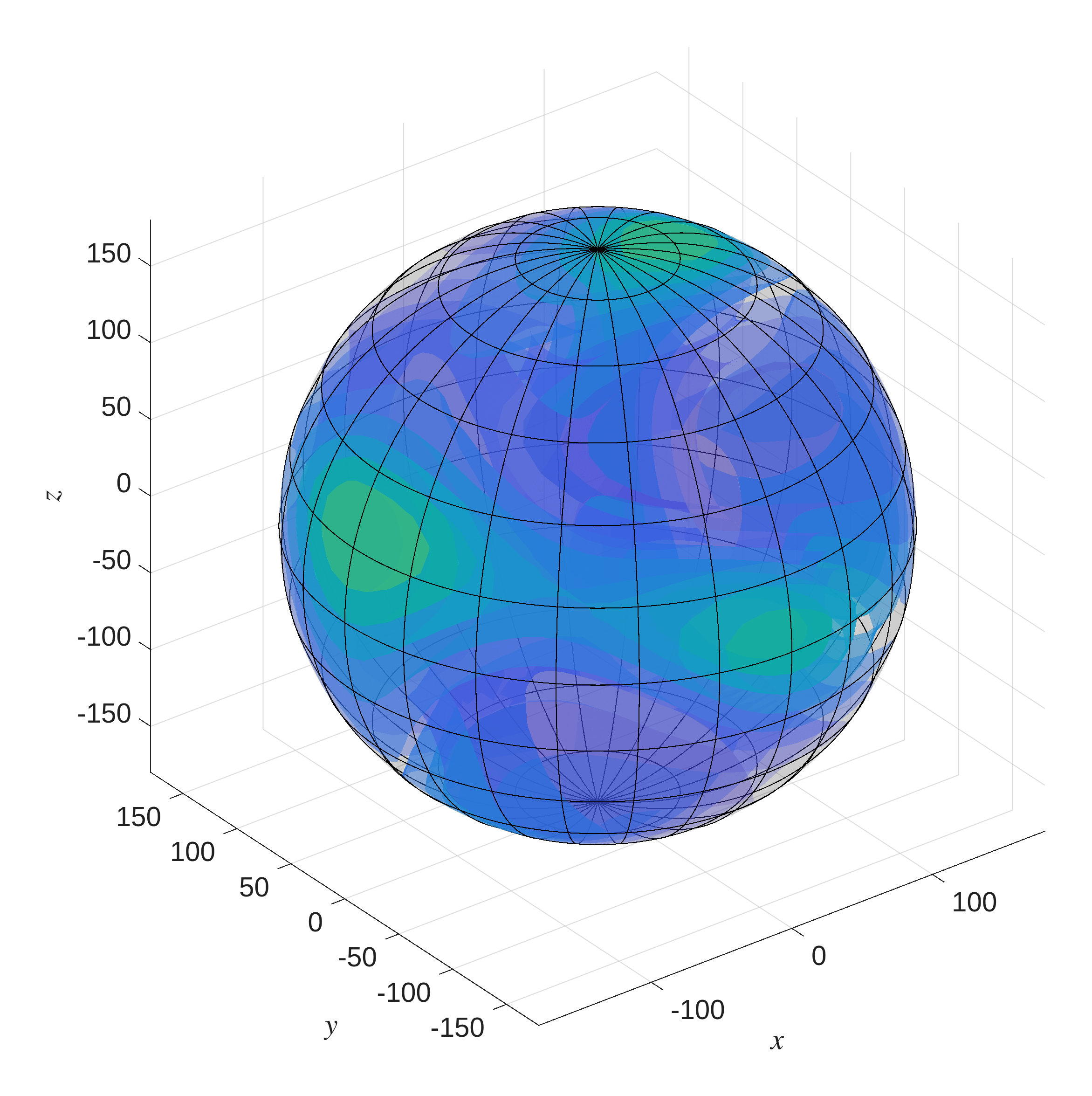}
    \subcaption{$1200$ time steps ($t=0.4$)}
  \end{subfigure}
  \begin{subfigure}[t]{0.3\textwidth}
    \centering
    \includegraphics[width = \textwidth]{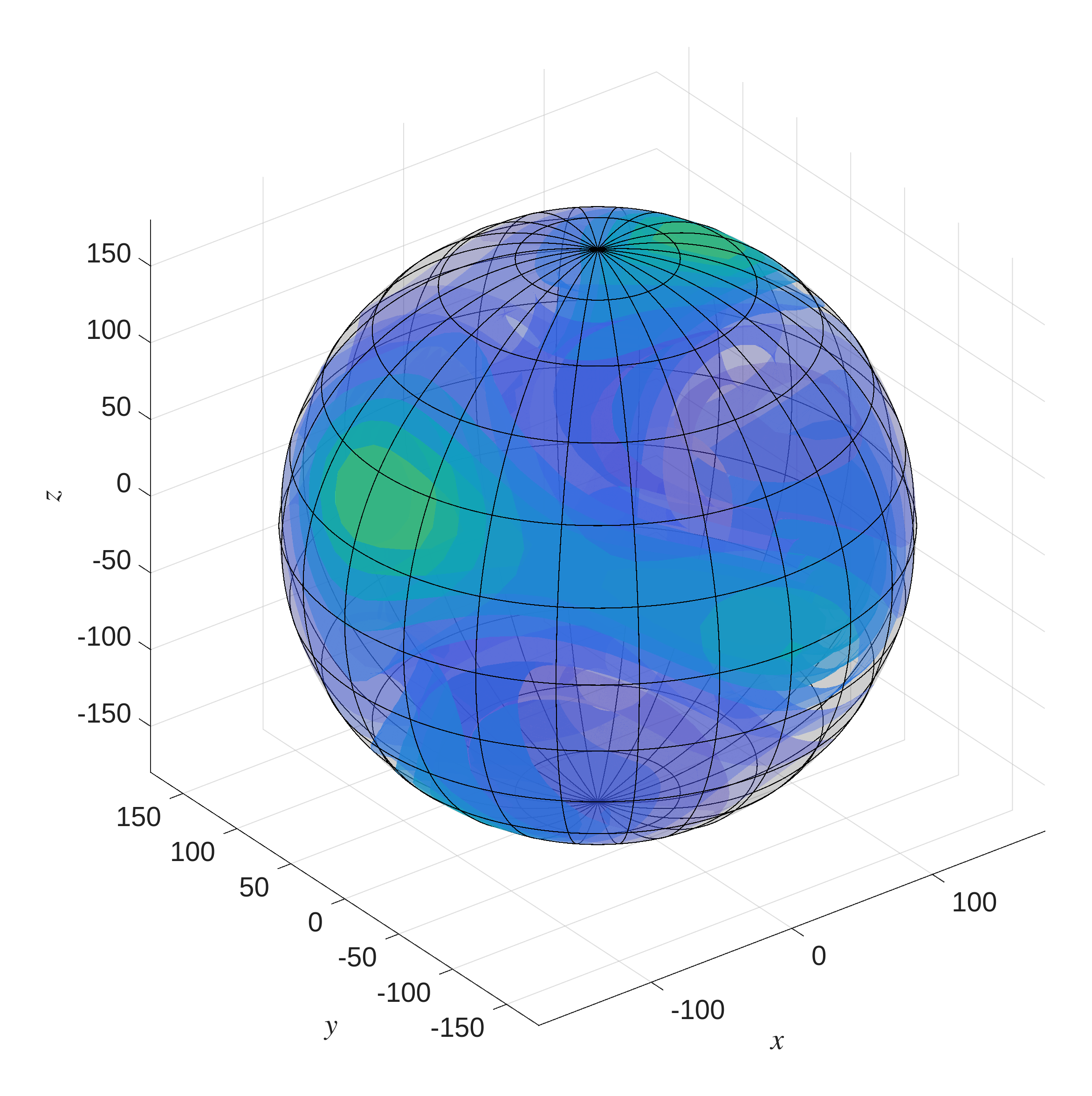}
    \subcaption{$1800$ time steps ($t=0.6$)}
  \end{subfigure}
  \begin{subfigure}[t]{0.3\textwidth}
    \centering
    \includegraphics[width = \textwidth]{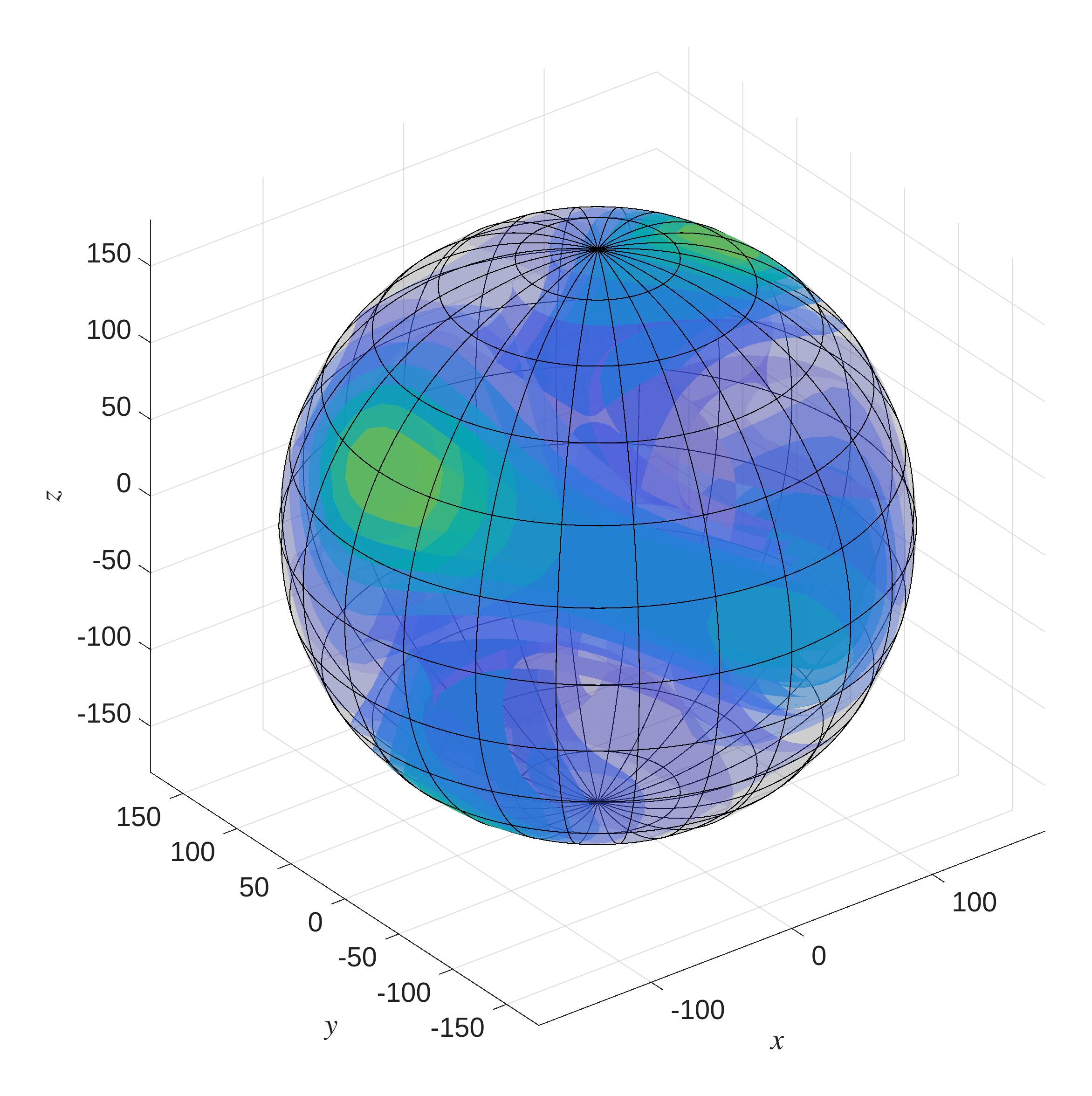}
    \subcaption{$2400$ time steps ($t=0.8$)}
  \end{subfigure}
  \begin{subfigure}[t]{0.3\textwidth}
    \centering
    \includegraphics[width = \textwidth]{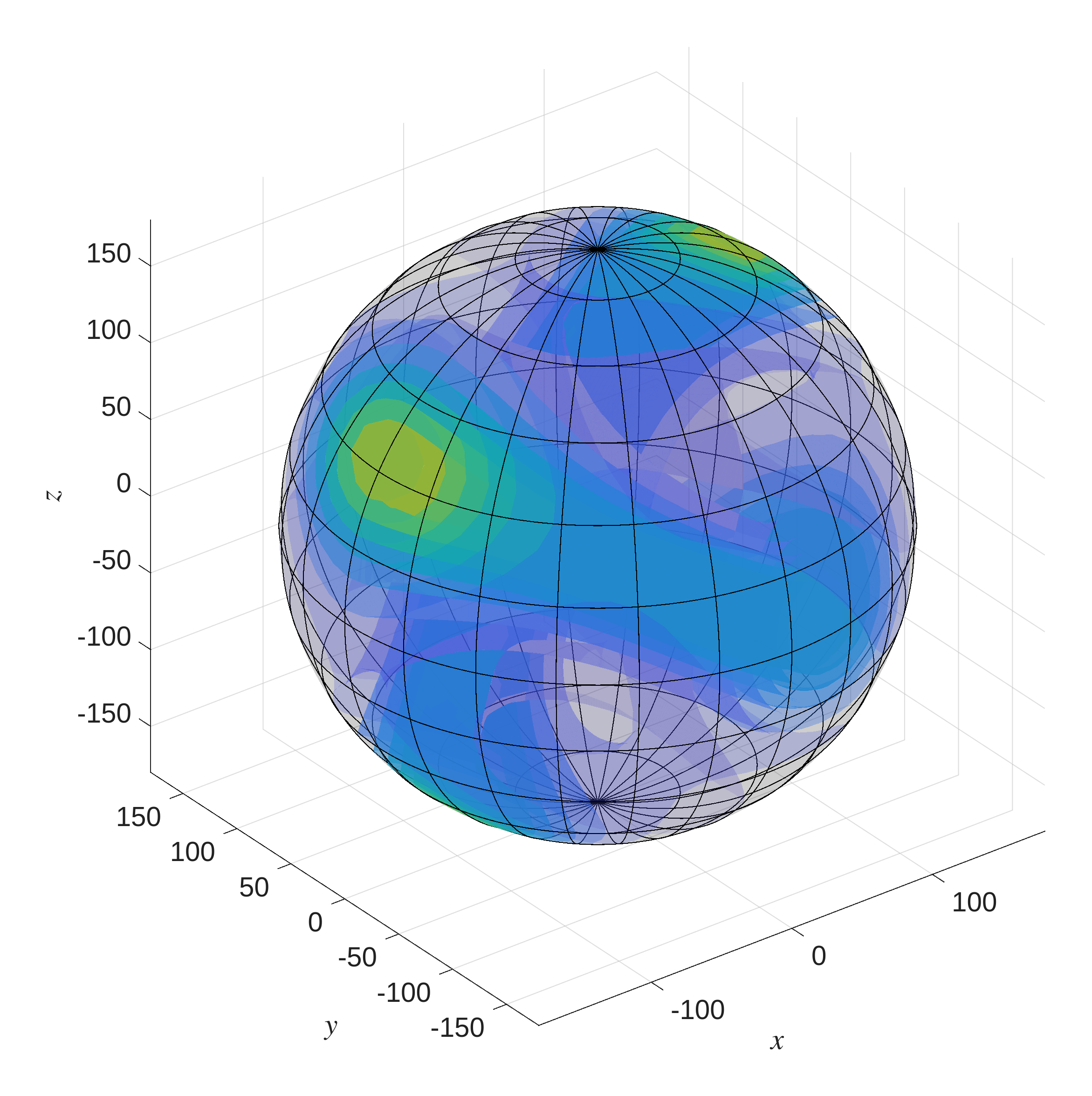}
    \subcaption{$3000$ time steps ($t=1$)}
  \end{subfigure}
  \caption{ Snapshots of the anisotropic rotational diffusion on $\SO3$ in axis-angle representation.
    The initial distribution is the equilibrium density associated with the potential $U_{\vec z}$. At $t=0$, the preferred direction of the orienting potential is changed to $R_{\vec y}(70^\circ)\vec z$, and the solution of \cref{eq:RotationalDiffusion} relaxes toward the corresponding new equilibrium.
    The panels show the rotation density after selected numbers of explicit Euler time steps.}\label{fig:RotationalDiffusion}
\end{figure}


\subsection*{Code Availability}
All algorithms presented in this paper are implemented in the MATLAB toolbox \texttt{MTEX}~7.0~\cite{MTEX}.
The numerical experiments were carried out using this implementation.
The corresponding scripts and the video accompanying \Cref{sec:BrownianMotion} are available at \url{https://github.com/mtex-toolbox/mtex-paper/tree/master/HarmonicVectorFieldRepresentationOnSO3}.

\bibliography{literatur}

\end{document}